\documentclass[11pt,authoryear,table,nopreprintline]{elsarticle}
\usepackage{setspace}
\usepackage{comment}
\usepackage{chngcntr}
\usepackage{subfiles}
\usepackage{amsmath}
\usepackage{amssymb}
\usepackage{amsthm}

\DeclareMathOperator*{\argmin}{arg\,min}
\usepackage{array}
\usepackage{multirow}
\usepackage{multicol}
\usepackage{booktabs}
\usepackage{wrapfig}
\usepackage{caption}
\usepackage{subcaption}
\usepackage{float}
\usepackage{longtable}
\usepackage{pgfplots}
\pgfplotsset{compat=1.17}

\usepackage{algorithm}
\usepackage{algpseudocode}
\makeatletter
\renewcommand{\ALG@name}{Algorithm}
\makeatother

\algrenewcommand\algorithmicrequire{\textbf{Input:}}
\algrenewcommand\algorithmicensure{\textbf{Output:}}
\algrenewcommand{\algorithmiccomment}[1]{\hfill{\small\textcolor{gray}{// #1}}}
\algrenewcommand\textproc{\textit} 
\algrenewcommand{\alglinenumber}[1]{\scriptsize #1:} 

\usepackage{csquotes}
\usepackage{siunitx}
\usepackage{hyperref}
\usepackage[margin=1in]{geometry}

\usepackage{microtype}
\usepackage{enumitem}
\setlist{noitemsep, topsep=2pt, partopsep=0pt, parsep=2pt}
\setlist[description]{noitemsep, leftmargin=0pt, labelindent=0pt, topsep=2pt}

\AtBeginDocument{
  \setlength{\abovedisplayskip}{4pt}
  \setlength{\belowdisplayskip}{4pt}
  \setlength{\abovedisplayshortskip}{2pt}
  \setlength{\belowdisplayshortskip}{2pt}
}

\allowdisplaybreaks

\makeatletter
\renewcommand{\section}{\@startsection{section}{1}{\z@}%
  {-2.0ex plus -0.5ex minus -.2ex}%
  {1.0ex plus .2ex}%
  {\normalfont\large\bfseries}}
\renewcommand{\subsection}{\@startsection{subsection}{2}{\z@}%
  {-1.6ex plus -0.5ex minus -.2ex}%
  {0.7ex plus .2ex}%
  {\normalfont\normalsize\bfseries}}
\renewcommand{\subsubsection}{\@startsection{subsubsection}{3}{\z@}%
  {-1.2ex plus -0.5ex minus -.2ex}%
  {0.5ex plus .2ex}%
  {\normalfont\normalsize\bfseries\itshape}}
\makeatother

\newtheorem{claim}{Claim}

\definecolor{safeblue}{RGB}{0,90,181}

\definecolor{safered}{RGB}{220,50,32}

\definecolor{purple}{RGB}{128,0,128}

\newcommand{\scalefigure}[1]{%
  \IfFileExists{#1}%
    {\includegraphics[width=0.8\textwidth]{#1}}%
    {\fbox{\parbox{0.78\textwidth}{\centering\vspace{1em}\textit{[Missing figure: \detokenize{#1}]}\vspace{2em}}}}%
}

\begin{document}

\begin{frontmatter}

\title{A Policy Decomposition Framework\\ for Dynamic Order Fulfillment Operations}

\author[label1]{Gal Neria\corref{cor1}}
\ead{galneria@mit.edu}

\author[label2]{Michal Tzur}
\ead{tzurm@tauex.tau.ac.il}

\author[label3]{Marlin W. Ulmer}
\ead{marlin.ulmer@ovgu.de}

\cortext[cor1]{Corresponding author}

\affiliation[label1]{organization={Massachusetts Institute of Technology},
            city={Cambridge},
            postcode={02139},
            state={MA},
            country={USA}}

\affiliation[label2]{organization={School of Industrial and Intelligent Systems Engineering, Tel Aviv University},
            city={Tel Aviv},
            postcode={6997801},
            country={Israel}}

\affiliation[label3]{organization={Otto-von-Guericke Universitat Magdeburg},
            city={Magdeburg},
            country={Germany}}

% Note: we are only allowed 250 words in the abstract!
\begin{abstract}
Modern supply chains span diverse operational environments, ranging from e-commerce distribution networks to customized production-to-order manufacturing lines. Across these settings, operational efficiency depends on tightly coordinating two highly interdependent stages: order preparation and downstream delivery. Although these stages are traditionally managed in isolation, real-world fulfillment systems must satisfy stringent delivery expectations under dynamic stochastic order arrivals. To bridge this gap, we introduce the Dynamic Order Fulfillment Problem (DOFP), a new problem class unifying logistical challenges previously studied separately. We model DOFP as a Markov decision process whose state and decision spaces are partitioned into preparation and delivery sub-spaces, linked by synchronization constraints. While state-of-the-art approaches attempt to optimize both fulfillment stages simultaneously over myopic rolling horizons, our framework isolates and optimizes the downstream delivery policy using stochastic look-ahead anticipation, treating preparation strictly as a state-level constraint filter. To solve this, we develop the Decomposition-Driven Framework with Value Function Approximation (DDF-VFA), which utilizes a novel policy-level decomposition. This design partitions the search into a delivery-stage master problem and a preparation-stage compatibility subproblem, iteratively refined via feedback loops. DDF-VFA executes this strategy by combining a large-neighborhood search over partial delivery decisions with a neural-network value function approximation for the cost-to-go. Numerical illustrations on two example variants using real-world datasets show that DDF-VFA consistently outperforms benchmarks that optimize the two stages independently or jointly without decomposition. Finally, the framework naturally scales to accommodate additional real-world complexities as modular add-ons, including batched or multi-stage preparation, and heterogeneous resources.
\end{abstract}

\begin{keyword}
(O) Transportation \sep Dynamic order fulfillment \sep Policy decomposition \sep Markov decision processes \sep Approximate dynamic programming 
\end{keyword}

\end{frontmatter}
%%%%%%%%%%%%%%%%%%%%%%%%%%%%%%%%%%%%%%%%%%%%%%%%%%%%%%%%%%%%%%%%%%%%%%
% Samples of sectioning (and labeling) in TRSC
% NOTE: (1) \section and \subsection do NOT end with a period
%       (2) \subsubsection and lower need end punctuation
%       (3) capitalization is as shown (title style).
%
%\section{Introduction.}\label{intro} %%1.
%\subsection{Duality and the Classical EOQ Problem.}\label{class-EOQ} %% 1.1.
%\subsection{Outline.}\label{outline1} %% 1.2.
%\subsubsection{Cyclic Schedules for the General \textbf{deterministic} SMDP.}
%  \label{cyclic-schedules} %% 1.2.1
%\section{Problem Description.}\label{problemdescription} %% 2.
% Text of your paper here
%\DoubleSpacedXI

\section{Introduction} \label{sec: intro}
%please so not remove this: {clear structure: problem field, problems, models, method, results and contribution. preparation phase, and delivery phase...}
\noindent Driven by the rapid expansion of e-commerce and on-demand delivery services, {global logistics spending reached an estimated \$12.8 trillion in 2024 and is projected to exceed \$14 trillion by 2029 \citep{statista2024}}. These industries, encompassing warehousing, production, and food services, face increasing pressure to deliver orders promptly and efficiently \citep{wassmuth2023demand,chen2024courier}. This pressure often leads to overlapping capture and fulfillment phases, where orders arrive while deliveries are still ongoing \citep{fleckenstein2023recent, ErazoToriello2025}. Addressing these challenges requires seamless real-time coordination across preparation and delivery stages.

% For example
%\paragraph{Real-World Applications.}  
%The synchronization of preparation and delivery stages is crucial across various industries. 
In e-commerce, companies such as Walmart must align warehouse picking operations with delivery schedules to ensure timely and cost-effective fulfillment of shipments. Competitive pressures drive continual reductions in delivery times, as seen in Amazon’s response to local Berlin competitors by introducing 60-minute delivery deadlines \citep{ulmer2017delivery}.
% In e-commerce, companies such as Walmart must align warehouse operations with delivery schedules to ensure timely, cost-effective shipments. 
Similarly, on-demand production requires precise coordination between manufacturing and distribution. This is evident in firms that must deliver customized materials to construction sites just in time \citep{dallasega2017method}. In manufacturing sectors such as metalworking and packaging, complex production planning with setup times must align with outbound distribution to maximize resource utilization \citep{berghman2023review}. 
In meal delivery platforms, synchronized meal preparation and driver dispatch are required, with added complexity when combining orders from multiple restaurants, as early preparation risks freshness, and delays affect customer satisfaction.
Lastly, in healthcare and humanitarian logistics, coordination between preparation and delivery is crucial to ensure the efficient distribution of pharmaceuticals, chemotherapy treatments, radio-pharmaceuticals, food donations, and blood supplies, avoiding waste and ensuring critical medicines reach patients on time \citep{eisenhandler2022multi, torrado2022towards,li2024model}.

%problem structure
%\paragraph{Problem Structure and General Challenges.}
Across the aforementioned diverse industries, these fulfillment applications share a core structure of two interdependent stages: preparation and delivery. Both stages require complex real-time optimization in a combinatorial environment, with preparation constrained by limited resources and delivery incurring most or all real-time decision-dependent costs, such as routing expenses and customer delays. {This cost asymmetry is a key insight supported by prior literature: preparation costs are often relatively fixed, largely determined by the number of available pickers, personnel, or machines, and are not highly sensitive to small variations in task timing. In contrast, costs in delivery operations such as fuel consumption or non-monetary cost of unpunctuality are highly variable, depending on dynamic routing decisions and dispatch timing.} The dynamic nature of order arrivals demands flexible, adaptive decision-making to ensure synchronized and efficient operations.

%\paragraph{Modeling.} 
To address such settings, we introduce the Dynamic Order Fulfillment Problem (DOFP), a new problem class that unifies diverse domains in which 
synchronization of preparation and delivery stages under dynamic, stochastic conditions is desired.
We model the class using a novel Markov decision process (MDP), which partitions the state space into two sub-spaces, each associated with a decision sub-space, representing the preparation and delivery stages. Uniquely, the decision sub-spaces are interdependent as they are linked by a set of constraints that represent the required synchronization in two-stage systems. 

Derived from our unique MDP formulation, we propose a decomposition framework for policy optimization that draws inspiration from the iterative refinement logic of traditional decomposition frameworks, but is fundamentally different in structure and scope. While classical decomposition methods operate over static decision variables in deterministic or two-stage stochastic settings, our method decomposes over a space of policies in a real-time, dynamic, and stochastic environment. Specifically, we separate the policy search into a master problem that proposes delivery policies and a subproblem that verifies the existence of compatible preparation policies. This formulation extends decomposition principles to the policy level within an MDP framework, enabling tractable optimization under uncertainty and temporal interdependence.

Building on this decomposition for the DOFP class, we propose the Decomposition-Driven Framework with Value Function Approximation (VFA) (\textit{DDF-VFA}). {This solution method leverages the policy decomposition to alternate between solving the master and subproblem iteratively, using feedback mechanisms to refine decisions across stages.}
%This solution framework is rooted in the policy decomposition, which partitions the problem into a master problem and a sub-problem, solved iteratively. 
That is, the \textit{DDF-VFA} includes methods to solve the policy master problem and the policy sub-problem and iterate between them using feedback loops. In this way, our solution framework utilizes the unique problem structure of the class of problems that we address.

In our implementation, the policy master problem is optimized using a large neighborhood search (LNS) heuristic which searches over partial solutions of the second-stage decisions while an innovative auxiliary algorithm completes them. To address the subproblem, the Synchronized Preparation Scheduling (SPS) algorithm rapidly constructs a compatible preparation-stage decision or generates feasibility cuts. Finally, the evaluation employs a VFA to estimate the long-term impacts of decisions, ensuring a balanced approach to immediate and future objectives.

In a meta-analysis, we specify and implement the \textit{DDF-VFA} for two selected problems from the two major areas of warehousing and production, each with its own domain-specific constraints and objectives. We compare our solution framework to several benchmark policies from the literature. We show that our approach provides the best performance and is robust to changes in data, constraints, and objective functions. For each component of the \textit{DDF-VFA}, we further present an alternative to illustrate their value and functionality.  

\subsection{Summary of Contributions}
On the theoretical side, we define the DOFP class (Section~\ref{sec: problem statement}), present a novel MDP formulation (Section~\ref{sec: MDP}), and introduce a policy-based decomposition approach to MDPs (Section~\ref{sec: decomposition}). Methodologically, we propose the \textit{DDF-VFA} solution framework tailored to the DOFP class structure (Section~\ref{sec: solution method description}), enhance it with specialized feature add-ons for broader applicability, and demonstrate its adaptability on two representative problems (Section~\ref{sec:model_problems}). Computationally, we demonstrate superior performance over previous approaches on two problems and two real-world datasets (Section~\ref{sec: results}), and present a component-level analysis showing robustness across varying objectives, constraints, and data conditions. %To further establish the structural generalizability of our framework, we also evaluate its performance on a pharmacy prescription delivery case study, detailed in \ref{app: pharmacy variant}. 
In the next section, we position our contributions in the related literature.

\section{Literature Review}\label{sec:literature_review}
\noindent {We organize the related literature in two streams. Section~\ref{sec:lit application} reviews application-domain work on integrated preparation-delivery operations. Section~\ref{sec:lit decomposition} discusses methodological work on decomposition and reinforcement-learning approaches that inform our policy decomposition.}

\subsection{Integrated Preparation-Delivery Operations}\label{sec:lit application}

The integration of preparation and delivery operations has attracted growing attention across multiple domains. Recent approaches address this systemic challenge through joint optimization models that attempt to synchronize both fulfillment stages concurrently. However, while making progress relative to disjoint optimization, these approaches still fail to fully exploit fundamental structural characteristics typical to preparation-delivery operations. Specifically, while variable costs concentrate heavily in the delivery stage and preparation capacity is largely determined by fixed staffing decisions made independently of real-time operations, existing frameworks treat both stages equally. Consequently, they fail to leverage the cost asymmetry inherent in these systems, which suggests prioritizing downstream delivery efficiency subject to upstream preparation feasibility. The following review examines how this tension manifests across literature streams.

In integrated picking and delivery, while several studies in static and deterministic settings demonstrate the clear benefits of joint planning, they typically treat the optimization of both stages uniformly rather than tailoring the solution methods to their distinct characteristics.
%reflect this inherent modeling symmetry.
\citet{neves2019solving} consider fixed wages in the preparation stage, meaning that operational costs accrue primarily in delivery; \citet{jamili2022quantifying} examine collaborative warehouses with shared resources and delivery-stage tardiness; and \citet{rijal2023dynamics} show that joint picking-delivery optimization outperforms sequential decision-making. Only a few studies explicitly consider uncertainty: \citet{raj2024stochastic} address tactical-level decisions through simulation, and \citet{d2024integrating} propose a metaheuristic for dynamic operations that minimizes tardiness and travel time without anticipatory mechanisms. Following \citet{rijal2023dynamics} and \citet{d2024integrating}, we adopt the \textit{Integrated} method (defined in Section~\ref{sec: benchmarks}) as a baseline benchmark.

In integrated production and delivery, studies similarly emphasize the dominance of outbound logistics expenses over preparation constraints. \citet{ganji2022new} show that aligning production and delivery schedules with respect to due dates and time windows reduces costs, emissions, and customer dissatisfaction; further objectives include delivery tardiness \citep{ullrich2013integrated}, time-window violations \citep{wu2022variable}, and total delivery times \citep{pereira2022hybrid}. Dynamic and stochastic settings remain less explored: \citet{liu2022approximate} integrate production and delivery under dynamic arrivals with fixed vehicle departure times, and \citet{KERMANI2024} use a two-stage stochastic program with uncertain demand; for a recent review, see \citet{berghman2023review}. While these works heavily weigh delivery-side penalties in their objective functions, they treat both stages similarly during optimization rather than decoupling them to exploit the rigid nature of preparation capacities relative to variable delivery costs.

Related streams of literature also arise in the context of meal preparation and delivery, where researchers primarily focus on optimizing courier logistics \citep{ulmer2021restaurant}. These studies implicitly isolate delivery costs by treating meal preparation durations as exogenous constants or simple stochastic invariants, thereby focusing exclusively on minimizing courier delays or maximizing fulfilled orders within strict deadlines. However, by omitting the active scheduling of the preparation stage, they do not address the operational interdependencies between processing and distribution, limiting their ability to optimize both stages simultaneously. An exception is \citet{NeriaGhost2024}, who study ghost kitchens sharing a facility. They propose an integrated anticipatory method for combined order preparation and delivery, utilizing an LNS to explore decisions that are subsequently evaluated by a VFA; a technique we adapt as an additional benchmark (referred to as \textit{AI}).

Despite these advances, the literature lacks a unified framework for integrated preparation-delivery operations under dynamic and stochastic order arrivals with explicit anticipation of future arrivals across both stages. Moreover, existing solution methodologies consistently fail to exploit the concentration of variable operational costs within the downstream delivery stage. By assigning equal attention to both decision spaces, conventional joint optimization introduces prohibitive computational burdens. Our policy-level decomposition framework directly mitigates this algorithmic challenge by isolating preparation as a feasibility-only subproblem.

\subsection{Decomposition and Reinforcement-Learning Approaches}\label{sec:lit decomposition}
Decomposition methods address the computational challenges of large-scale optimization by dividing problems into smaller, more manageable components that can be solved iteratively \citep{wolsey2014integer}. They have proven highly effective in combinatorial optimization \citep{fontaine2023branch,godbersen2024robust}.
A notable approach is the Dantzig-Wolfe decomposition that partitions the decision space into subproblems coordinated by a master problem \citep{Dantzig1960}. Extensions are successfully applied to non-convex problems \citep{byers2023long, jacquillat2024branch}, multi-stage stochastic programming \citep{singh2009dantzig}, and other problems with stochastic elements \citep{rostami2021branch}. These advancements demonstrate the flexibility of the Dantzig-Wolfe framework in addressing uncertainty and complex constraints. However, they remain focused on static decision-making.

Another widely used approach, the Benders decomposition, iteratively refines solutions by partitioning decision variables into a master problem and subproblem(s) \citep{Benders1962}. Although highly effective for problems with separable structures \citep{Geoffrion1972}, it struggles with inseparability and synchronization requirements. Several extensions to two-stage stochastic programming \citep{wang2020stochastic}, and Bilevel optimization \citep{beck2023survey}
are presented.
Recent advancements introduce hybrid methods \citep{rostami2023convex}. While distinct from Benders decomposition, these methods share its iterative refinement notion. {A related contribution is the work of \citet{liu2023demand}, who apply Benders decomposition to a deterministic MILP approximation of a dynamic last-mile delivery problem. However, their method does not decompose a policy space or operate on the full state dynamics of an MDP, but rather optimizes on a tractable surrogate model.} Applying such approaches to dynamic systems with evolving states and interdependent decisions remains unexplored.

Reinforcement Learning (RL) decompositions offer another perspective, classified broadly into Hierarchical RL (HRL) and Multi-Agent Systems (MAS). HRL simplifies the state space but often assumes a small decision space, limiting its focus to state complexity rather than decision complexity \citep{barto2003recent,vezhnevets2017feudal,zhang2026enhancing}. Multi-Agent Systems (MAS) distribute decisions across agents to reduce computational complexity but typically assume independence between agents, which restricts their applicability to tightly coupled problems \citep{lowe2017multi,zhang2021multi,ahadi2023cooperative, guillet2023coordinated}.

An exception is the work of \citet{kullman2021electric}, which addresses an electric vehicle routing problem with known customer locations and uncertainty at charging stations. Their Benders-based decomposition method exploits the problem’s structure by separating deterministic routing decisions from recharging decisions, which are solved in a subproblem using queuing theory approximations. Our work extends decomposition techniques to a broader class of dynamic stochastic systems, explicitly addressing synchronization constraints and iterative feedback between decision stages. Furthermore, our framework uniquely integrates immediate and future costs through dynamic feedback, enabling coordinated decision-making across interdependent stages.

\section{The DOFP Class Definition}\label{sec: problem statement}

In this section, we introduce the DOFP class of problems, which models the complexities of a two-stage fulfillment operation. Note that the term \enquote{two-stage} refers to the sequential nature of fulfillment operations (comprising preparation followed by delivery) and does not imply a two-stage stochastic programming formulation. Our model captures real-time decision making under dynamic and stochastic order arrivals.
We begin with a general description in Section~\ref{sec:Class Description}, detailing its core structure and components. In Section~\ref{sec: example}, we provide an example to illustrate a problem in this class. Next, Section~\ref{sec: MDP} presents an MDP formulation for any problem in this class, capturing its inherent dynamics and uncertainties.

\subsection{Class Description}\label{sec:Class Description}
To precisely characterize the types of problems that belong to the DOFP class, we outline the key defining elements below.

\begin{enumerate}
    \item Orders arrive dynamically and stochastically over time at a facility in a given location. 
    \item Each order has to be prepared and then delivered to a destination location indicated by the order.
    \item There are two sets of disjoint resources: preparation and delivery. Typically, the second stage resources are vehicles. The resources in each set could be either homogeneous or heterogeneous and are responsible for performing the preparation and delivery operations subject to various constraints.
    The preparation stage may represent complex environments, e.g., {sequence dependent set-up times between orders,} multiple sub-stages or simultaneous processing of multiple orders (batching), as long as any costs associated with this stage remain fixed, independent of the specific operational decisions made within this stage. The delivery stage may represent an intricate process, e.g., delivery to a micro-hub from which delivery is made to the order's destination. 
    \item Upon the arrival of the $k$th order, there are two types of decisions to be made: $x_k^{(1)}$ that allocates preparation resources to the new order and possibly modifies previous allocations of preparation resources to other orders; 
    and $x_k^{(2)}$ that allocates delivery resources to the new order, and possibly modifies previous allocations of delivery resources to other orders.
    \item Synchronization between the preparation and delivery stages is essential to maintain feasible and efficient operations. Specifically, timing must be coordinated so that prepared orders are ready when delivery resources dispatch them. The extent of required coordination may vary across different problem variants within this class.
    \item The delivery operations incur variable costs that depend on the decisions, whereas the costs of preparation operations are fixed and do not depend on operational decisions. {This cost asymmetry reflects industry practice in which preparation capacity is determined by the number of available pickers, personnel, or machines on shift and is largely insensitive to small variations in task timing, while delivery costs (routing, fuel, unpunctuality) vary directly with operational decisions \citep{moons2018integration, neves2019solving, rijal2023dynamics, NeriaGhost2024}.}
    \item Decisions made in both stages are non-preemptable: in-progress preparation tasks must be completed, and a vehicle already en route must finish its current trip before its plan can be revised at a subsequent decision epoch. This design mirrors dispatching and production practices in dynamic fulfillment systems, avoiding the significant coordination overhead required to interrupt tasks or recall vehicles mid-route.
\end{enumerate}
For convenience, throughout the remainder of the paper, we refer to preparation resources simply as "resources" and to delivery resources as "vehicles".

\subsection{Example}\label{sec: example}
To illustrate the components of the DOFP class and to prepare the model, we present a small example of a state of the problem in Figure \ref{fig:example}. To keep the presentation simple, we omit detailed mathematical notation. 
\begin{figure}[h!]
   \centering
   \caption{Example for a general state of a problem in the DOFP class}  
\includegraphics[width=0.81\textwidth]{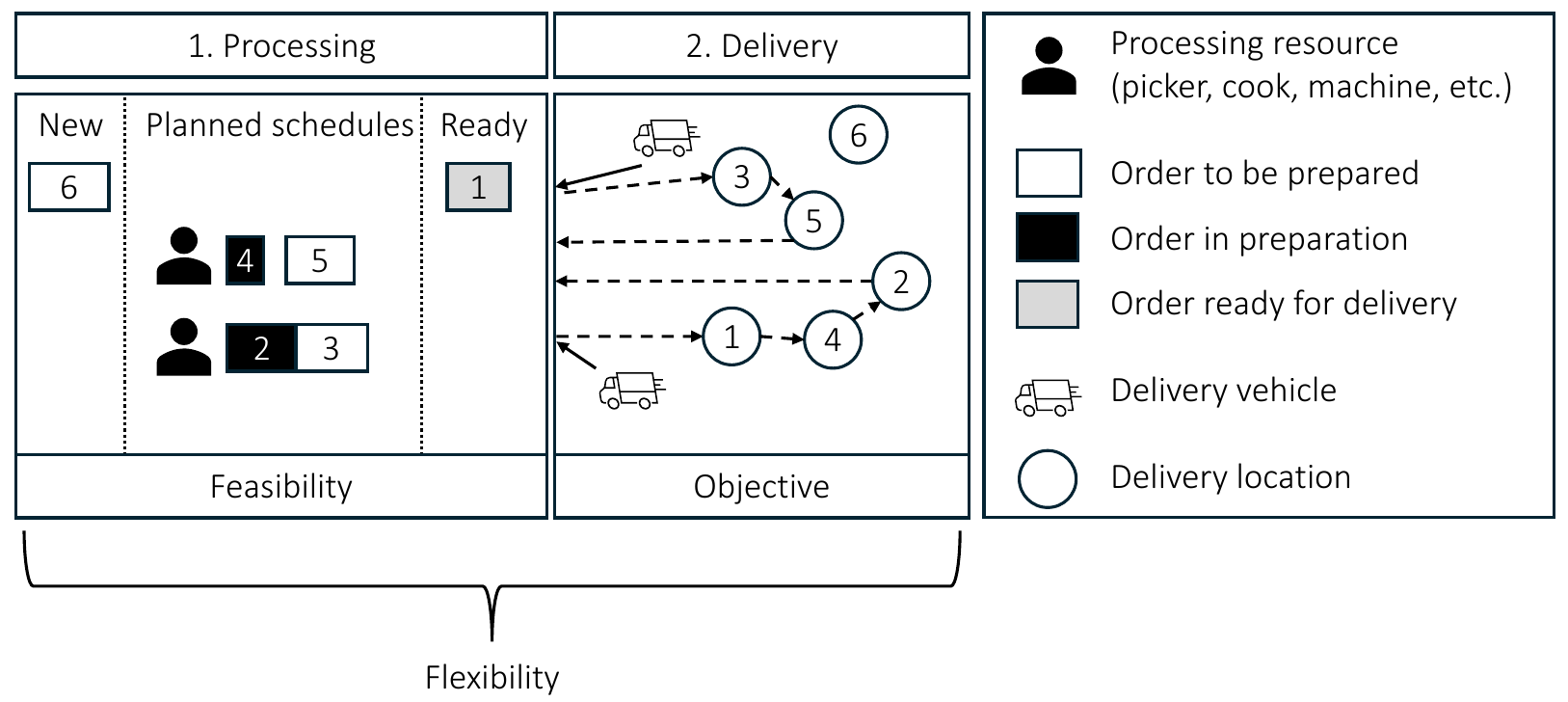}
%\scalefigure{figures/example.pdf}
   \label{fig:example}
\end{figure}

The figure summarizes an example state of the system. In this state, six orders are managed, represented as rectangles. White rectangles represent orders for which preparation has not started yet. The width of each white rectangle corresponds to the order’s processing time, while black rectangles indicate orders currently in preparation, with their remaining processing time shown by their widths. Two resources (machines or employees) handle these orders. Order 1 has already been processed and is ready for delivery, indicated by its grey shade. Orders 4 and 2 are in preparation, while orders 5 and 3 are scheduled next. The small gap between orders 4 and 5 might be the result of some additional constraint such as setup times, perishability of orders, or time windows. Order 6 has just arrived and is not yet assigned to any resource. The delivery fleet consists of two vehicles, each currently on their way. Their return times are reflected by the length of the solid line. One vehicle is scheduled to deliver orders 3 and 5 once it returns and both are ready. The second vehicle is scheduled to deliver orders 1, 4, and 2 once it returns and they are ready. Each trip has a planned departure time, ensuring all assigned orders are prepared, and the vehicle is available at departure. While in the example, only one planned trip per vehicle is scheduled, in the model, multiple subsequent planned trips per vehicle are possible in a state.

A decision in this state has several components. First, with respect to the processing stage, a decision must be made about how to integrate the new order 6 into the existing schedules. One option is to add order 6 at the end of the current preparation sequence on one of the resources in a first-in, first-out manner. In that case, if no constraints are violated, order 6 might be scheduled after order 5. Alternatively, order 6 could be inserted before order 5 or 3, changing the sequence. It is also possible to reallocate orders between resources, such as assigning order 6 to the top resource while moving order 5 to the bottom resource before order 3. Besides sequencing, decisions about the precise starting times for each order’s preparation are required. All components of a decision regarding the first stage need to ensure feasibility on the first stage and in connection to the second stage. In the second stage, the delivery, the components of the decision contain the assignment of the new order to a trip and the update of the planned trips. For example, order 6 could be added to one of the scheduled trips or assigned a subsequent delivery trip. Alternatively, orders might be reassigned and new trips generated, e.g., by creating three trips for orders 1 and 4, for orders 2 and 5, and for orders 3 and 6. Again, all trips require determination of timing and need to ensure feasibility with respect to the problem constraints. In the second stage the objective is realized and affects the decisions.

Even this small example already highlights the complexity of the decision space with the challenge of finding an effective second stage plan with respect to the objective while ensuring feasibility in connection to the first stage and in the first stage itself. At the same time, since the problem is stochastic and dynamic, all resources should be kept flexible with respect to future developments.

\subsection{MDP Formulation of the DOFP}\label{sec: MDP}

We next present an MDP formulation for a general problem in the DOFP class, following the modeling framework of \citet{ulmer2020modeling} {(route-plan based model)}. The DOFP involves two tightly interdependent stages: preparation and delivery. To effectively capture this structure, the MDP explicitly represents these stages by separating the state and decision variables. This decomposition emphasizes the synchronization challenges between stages, enhances interpretability, and supports the decomposition-based solution frameworks introduced later. Below, we define the key components of the MDP formulation, starting with general notation.

Orders arrive during $[0, T^c]$ and are processed and delivered during $[0, T]$ ($T>T^c$). 
We represent the complete set of orders by $I$. Each order $i\in I$ is associated with an arrival time to the system \( t^o_i \) and a customer location $l_i$ (the order's destination), as well as a set of problem-specific attributes. The set of resources (of the first stage) is denoted by $C$. The set \( V \) represents the vehicles. 
We define the facility's location, where orders are prepared, as 0. The travel time between the locations of orders \(i \in I \cup \{0\}\) and \(j \in I \cup \{0\}\) (with \(i = 0\) or \(j = 0\) representing the facility) is denoted by \(t^t_{ij}\).

\paragraph{Decision Points}
A decision point \(k\) arises each time a new order is placed, as well as at time \(T^c\). Given that the arrival of orders is stochastic, the total number of decision points \(K\) is a random variable.

\paragraph{States}
A state \( S_k \in \mathcal{S} \) at each decision point \( k \) is defined as \( S_k = (S_k^{(1)}, S_k^{(2)}) \), where \( S_k^{(1)} \) and \( S_k^{(2)} \) represent the information relevant to the preparation and delivery stages, respectively. These sets may share mutual information critical to each of the stages and their coordination.

The sets \( S_k^{(1)} \) and \( S_k^{(2)} \) each contain information on the set of \emph{open orders} \( I_k \). This set includes all orders, along with the information that comes with them, that have been placed but not yet dispatched, including orders currently in preparation or completed but awaiting dispatch, as well as the newly arrived order \( i_k \). Specifically, we denote the information on open orders in each stage as \( I_k^{(1)} \) for the preparation stage and \( I_k^{(2)} \) for the delivery stage.
\( I_k^{(1)} \) includes details relevant to preparation, such as order-specific processing requirements, while \( I_k^{(2)} \) covers delivery-relevant information, including order-specific delivery requirements and customer locations.
Together, \( I_k = I_k^{(1)} \cup I_k^{(2)} \).

The set \( S_k^{(1)} \) contains $ I_k^{(1)}$, the details of the current first-stage plan denoted by \( \Psi_k \), and the associated starting times \( t^s_k \) for each preparation task (of one or several orders simultaneously).
The set \( S_k^{(2)} \) contains \( I_k^{(2)} \), the second-stage plan
\( \Theta_k \) which includes plans for all vehicles \( v \in V \), and information regarding times associated with planned trips and vehicles. Specifically, each vehicle's plan \( \Theta_{kv} \in \Theta_k \)  includes trips that are organized as a sequence \( \theta_{kv1}, \theta_{kv2}, \ldots \), with each trip containing specific order sequences and planned departure times \( t^d_k \) (either from a facility or from a customer, when applicable). The return times \( t^r_k \) for all vehicles indicate the next expected returns to the facility, accounting for whether each vehicle is idle or on a previous trip.
The end time of preparation for each order that began before \( t_k \), denoted \( t^{m}_k \), is derived from \( (\Psi_k, t^s_k) \in S_k^{(1)} \) and is included in \( S_k^{(2)} \) due to the required synchronization between the two stages. To summarize:
\begin{equation}\label{eq: state decomposition}
\resizebox{\textwidth}{!}{$
S_k^{(1)} = \big(\underbrace{t_k}_{\scriptscriptstyle\text{Time}}, \underbrace{I^{(1)}_k}_{\scriptscriptstyle\text{Orders}}, \underbrace{\Psi_k, t^s_k}_{\scriptscriptstyle\text{Resources}}\big),\;
S_k^{(2)} = \big(\underbrace{I^{(2)}_k}_{\scriptscriptstyle\text{Orders}}, \underbrace{t^{m}_k}_{\scriptscriptstyle\text{Sync}}, \underbrace{\Theta_k, t^d_k, t^r_k}_{\scriptscriptstyle\text{Vehicles}}\big),\;
S_k = \big(\underbrace{t_k}_{\scriptscriptstyle\text{Time}}, \underbrace{I_k}_{\scriptscriptstyle\text{Orders}}, \underbrace{\Psi_k, t^s_k}_{\scriptscriptstyle\text{Resources}}, \underbrace{\Theta_k, t^d_k, t^r_k}_{\scriptscriptstyle\text{Vehicles}}\big)
$}
\end{equation}
where \( t^{m}_k \) does not appear explicitly in $S_k$ because, as mentioned above, it is derived from \( (\Psi_k, t^s_k) \).

\paragraph{Decisions}
A decision \( x_k = (x_k^{(1)}, x_k^{(2)}) \in \mathcal{X}(S_k) \) at each decision point \( k \) consists of two components. The preparation plan update \( x_k^{(1)} = (\Psi_k^x, t_k^{s,x}) \in \mathcal{X}^{(1)}(S_k^{(1)}) \) incorporates the allocation of \( i_k \) to preparation resources and may involve rescheduling other orders or even modifying the entire plan, except for already started tasks; it satisfies a set of preparation constraints \( \mathcal{Q}_{\text{prep}}(S_k^{(1)}) \). The delivery plan update \( x_k^{(2)} = (\Theta_k^x, t^{d,x}_k) \in \mathcal{X}^{(2)}(S_k^{(2)}) \) incorporates \( i_k \) in the vehicles' schedule, possibly reassigning other orders or trips and adjusting them (and potentially modifying the entire vehicles' schedule), and satisfies a set of delivery constraints \( \mathcal{Q}_{\text{deliv}}(S_k^{(2)}) \).
A combined decision \( x_k = (x_k^{(1)}, x_k^{(2)}) \in \mathcal{X}(S_k) \) additionally meets a set of synchronization constraints \( \mathcal{Q}_{\text{sync}}(S_k) \); when this holds, we say that \( x_k^{(1)}\) and \( x_k^{(2)}\) are \emph{compatible decisions}.
At the final decision point \( K \) (time \( T^c \)), the decision \( x_K \) focuses solely on optimizing existing schedules and resource allocations without considering either a new order or future orders.

\paragraph{Costs}
In accordance with the definition of the DOFP, variable operational costs are incurred solely from the decisions $x_k^{(2)}$ and not $x_k^{(1)}$. Thus, the immediate cost associated with state $S_k = (S_k^{(1)}, S_k^{(2)})$ and decision $x_k^{(2)}$ is defined by
$\mathbb{C}(S_k^{(2)}, x_k^{(2)})$.

\paragraph{Post-Decision States}
A post-decision state $S_k^x \in \mathcal{S}^x$ captures the system's state immediately after a decision is made but before any new order arrives (i.e., before transitioning to the next state with the arrival of $i_{k+1}$). Starting from the current state $S_k = (t_k, I_k, \Psi_k, t^s_k, \Theta_k, t^d_k, t^r_k)$ and applying the decision $x_k = (x^{(1)}_k, x^{(2)}_k ) = (\Psi_k^x, t_k^{s,x}, \Theta_k^x, t^{d,x}_k)$, the resulting post-decision state is defined as $S_k^x = (t_k, I_k, \Psi_k^x, t^{s,x}_k, \Theta_k^x, t^{d,x}_k, t^r_k)$.

\paragraph{Stochastic Information}
The stochastic information $W_{k+1} \in \Omega$ consists of the new order $i_{k+1}$, which includes the order placement time $t^o_{i_{k+1}}$, the order location $l_{i_{k+1}}$, and possibly other relevant attributes specific to the problem. In a special case where no additional orders are received, represented by $W_{k+1} = \emptyset$, the decision process advances to the final state at \(T^c\).

\paragraph{Transition}
Given a state \( S_k = (S_k^{(1)}, S_k^{(2)}) \), a decision \( x_k = (x_k^{(1)}, x_k^{(2)}) \), and the stochastic information \( W_{k+1} \), the next state \( S_{k+1} = (S_{k+1}^{(1)} , S_{k+1}^{(2)}) \) is determined by the transition function \( S^M(S_k, x_k, W_{k+1}) \).
During the transition, the current time is updated to \( t_{k+1} = t^o_{i_{k+1}} \). The new set of open orders, \( I_{k+1} \), includes all orders from the updated trip plans \( \Theta^x_k \) with departure times later than \( t_{k+1} \), as well as the new order \( i_{k+1} \).
The resource schedules in \( S_{k+1}^{(1)} \) for each resource \( c \in C \) and the vehicle trip plans in \( S_{k+1}^{(2)} \) for each vehicle \( v \in V \) are then updated to reflect only the orders in \( I_{k+1} \).
Correspondingly, \( t^m_{k+1} \) in \( S_{k+1}^{(2)} \) is updated to include the end times of preparation for all open orders in \( I_{k+1} \) that began preparation prior to \( t_{k+1} \).
The return time \( t^{r}_{k+1,v} \) in \( S_{k+1}^{(2)} \) for each vehicle \( v \in V \) is updated to reflect the return time from its last trip departing before \( t_{k+1} \), or it is set to \( t_{k+1} \) if the vehicle is idle at that time.

\paragraph{Objective}
A policy, denoted by \( \pi \in \Pi \), is a mapping that assigns a feasible decision \( \pi(S_k) = x_k = (x_k^{(1)}, x_k^{(2)}) \in \mathcal{X}(S_k) \) to each state \( S_k \in \mathcal{S} \). Specifically, let \( \Pi^{(1)} \) and \( \Pi^{(2)} \) represent the sets of corresponding feasible policies for the first and second stages, respectively. A first-stage policy \( \pi^{(1)} \in \Pi^{(1)} \) is defined by the mapping \( \pi^{(1)}(S_k^{(1)}) = x_k^{(1)} \in \mathcal{X}^{(1)}(S_k^{(1)}) \), while a second-stage policy \( \pi^{(2)} \in \Pi^{(2)} \) is defined by \( \pi^{(2)}(S_k^{(2)}) = x_k^{(2)} \in \mathcal{X}^{(2)}(S_k^{(2)}) \). Together, a complete policy \( \pi = (\pi^{(1)}, \pi^{(2)}) \) is feasible and belongs to \( \Pi \) if and only if, for each state \( S_k = (S_k^{(1)},S_k^{(2)})\), \( \pi^{(1)}(S_k^{(1)}) \text{ and } \pi^{(2)}(S_k^{(2)})\) are compatible decisions. Then, we also refer to $\pi^{(1)} \text{ and } \pi^{(2)}$ as \emph{compatible policies}.

Starting from the initial state \( S_0 = (0, \emptyset, \emptyset, \emptyset, \emptyset, \emptyset, \emptyset, \emptyset) \), the goal is to find a policy \( \pi^* = (\pi^{(1)*}, \pi^{(2)*}) \) that minimizes the expected cumulative marginal costs across all decision points:
\begin{equation}\label{eq:optimal policy}
   \pi^* = (\pi^{(1)*}, \pi^{(2)*}) \in \argmin_{\pi = (\pi^{(1)}, \pi^{(2)}) \in \Pi} \mathbb{E}\left[\sum_{k=0}^K \mathbb{C}(S_k^{(2)}, \pi^{(2)}(S_k^{(2)})) \mid S_0\right].
\end{equation}
The optimal value of a state \( S_k \), denoted by \( V(S_k) \), represents the minimum expected future cost starting from \( S_k \) and can be expressed recursively as:
\begin{equation} \label{eq: V opt}
    V(S_k) = \min_{x_k=(x_k^{(1)}, x_k^{(2)}) \in \mathcal{X}(S_k)}\left\{ \mathbb{C}(S_k^{(2)}, x_k^{(2)}) + \mathbb{E}[V(S_{k+1}) \mid S_k, x_k]\right\},\quad \forall k \leq K,
\end{equation}
where $V(S_{K}) = \mathbb{C}(S_K^{(2)}, x_K^{(2)}), \text{ for all } S_K \text{ and } x_K=(x_K^{(1)}, x_K^{(2)}) \in \mathcal{X}(S_K)$.
The term \( \mathbb{E}[V(S_{k+1}) \mid S_k, x_k] \) represents the expected value of \( V(S_{k+1}) \) given state \( S_k \) and decision \( x_k \) (the cost-to-go) with the expectation taken over \( W_{k+1} \) (the scenario at \( t_{k+1} \)).
Equation (\ref{eq: V opt}) illustrates the dependency of each immediate cost on \( S_k^{(2)} \) and \( x_k^{(2)} \) only, while \( x_k^{(1)} \) impacts future costs through the transition function that determines $S_{k+1}$.

\section{Policy Decomposition}\label{sec: decomposition}
In this section we propose a novel decomposition
formulation to address the computational complexity of solving the DTS-OSP.
Unlike traditional decomposition methods designed for static or deterministic problems, this formulation accommodates the complexity of dynamic, stochastic systems where the solution is defined as a policy.

\paragraph{Preliminaries} Recall that the policies $\pi^{(1)} \in \Pi^{(1)}$ and $\pi^{(2)} \in \Pi^{(2)}$ govern the preparation and delivery stages, respectively, satisfying the stage-specific constraints $\mathcal{Q}_{\text{prep}}(S^{(1)}_k)$ and $\mathcal{Q}_{\text{deliv}}(S^{(2)}_k)$ for all states $(S^{(1)}_k, S^{(2)}_k) \in \mathcal{S}$. Notably, $\pi^{(1)}$, and $\pi^{(2)}$, may not have a compatible policy in the other stage.
Therefore, we define $\Tilde{\Pi}^{(2)} = \{\pi^{(2)} \in \Pi^{(2)} \mid \exists \pi^{(1)} \in \Pi^{(1)} \text{ such that } (\pi^{(1)}, \pi^{(2)}) \in \Pi \}$ as the set of second-stage policies that have a compatible first-stage policy. 
Given the definition of $\Tilde{\Pi}^{(2)}$, we now introduce an alternative problem representation to leverage the structure of the DOFP class.

\paragraph{Alternative Problem Representation}
The optimal delivery-stage policy can be defined by Eq.\eqref{eq: alternative problem formulation}.  
\begin{equation}\label{eq: alternative problem formulation}
   \pi^{(2)*} = \argmin_{\pi^{(2)} \in \Tilde{\Pi}^{(2)}} \mathbb{E} \left[\sum_{k=0}^K \mathbb{C}(S_k^{(2)}, \pi^{(2)}(S_k^{(2)})) \mid S_0 \right]
\end{equation}

This representation is motivated by the special structure of problems belonging to the DOFP class, in which the costs depend only on the second-stage policy. 
From \eqref{eq: alternative problem formulation}, we observe that the objective value is supposedly independent of $\pi^{(1)}$ and is only a function of $\pi^{(2)}$. 
However, $\pi^{(1)}$ has a role in determining the set of $\Tilde{\Pi}^{(2)} \subseteq {\Pi}^{(2)}$, which might be challenging to do in advance. 
Thus, we suggest a decomposition that leverages the two-stages structure of the problem, as detailed below.

\paragraph{Policy Decomposition}
We separate the optimization tasks into a Policy Master-Problem ($\mathcal{PMP}$), which focuses on delivery policies, and a Policy Sub-Problem ($\mathcal{PSP}$), which ensures compatibility with the preparation policies. Together, they iteratively refine the solution space until optimality is achieved. 

The $\mathcal{PMP}$ is defined as determining a delivery-stage policy $\pi^{(2)*}_{\text{{\tiny{MP}}}}$
as follows:
\begin{equation} \label{eq: MP policy}
 \mathcal{PMP}: \quad  \pi^{(2)*}_{\text{{\tiny{MP}}}} = \argmin_{\pi^{(2)} \in \Pi^{(2)}_{\text{{\tiny{MP}}}}} \mathbb{E} \left[\sum_{k=0}^K \mathbb{C}(S_k^{(2)}, \pi^{(2)}(S_k^{(2)})) \mid S_0 \right],
\end{equation}
where $\Pi^{(2)}_{\text{{\tiny{MP}}}}$ is a set of delivery-stage decisions in ${\Pi}^{(2)}$ that were not shown to be infeasible at a given iteration, hence, $\Tilde{\Pi}^{(2)} \subseteq \Pi^{(2)}_{\text{{\tiny{MP}}}} \subseteq {\Pi}^{(2)}$.
The initial set $\Pi^{(2)}_{\text{{\tiny{MP}}}}$ includes all delivery-stage policies $\Pi^{(2)}$ but can be later pruned according to the $\mathcal{PSP}$. 
The solution $\pi^{(2)*}_{\text{{\tiny{MP}}}}$ minimizes the objective value, but it does not necessarily ensure the existence of a compatible policy in $\Pi^{(1)}$, i.e., feasibility.

The $\mathcal{PSP}$, given $\pi^{(2)*}_{\text{{\tiny{MP}}}}$, searches for a compatible preparation-stage policy $\pi^{(1)}_{\text{{\tiny{SP}}}} \in \Pi^{(1)}_{\text{{\tiny{SP}}}}(\pi^{(2)*}_{\text{{\tiny{MP}}}}) = \{\pi^{(1)} \in \Pi^{(1)} \mid \pi^{(1)} \text{ and } \pi^{(2)*}_{\text{{\tiny{MP}}}} \text{ are compatible} \}$, where $\Pi^{(1)}_{\text{{\tiny{SP}}}}(\pi^{(2)*}_{\text{{\tiny{MP}}}})$ denotes the set of compatible preparation-stage policies to $\pi^{(2)*}_{\text{{\tiny{MP}}}}$. 
The problem is defined as:
\begin{equation} \label{eq: SP policy}
 \mathcal{PSP}: \quad     \text{Find } \pi^{(1)}_{\text{{\tiny{SP}}}} \in \Pi^{(1)}_{\text{{\tiny{SP}}}}(\pi^{(2)*}_{\text{{\tiny{MP}}}}).
\end{equation}
Note that $\mathcal{PSP}$ is a feasibility problem and it either obtains a feasible $\pi^{(1)}_{\text{{\tiny{SP}}}}$ (then $\pi^{(2)*}_{\text{{\tiny{MP}}}}$ is optimal) or concludes that there is no feasible solution, and hence $\pi^{(2)*}_{\text{{\tiny{MP}}}}$ (and possibly other second-stage policies) is excluded from the set $\Pi^{(2)}_{\text{{\tiny{MP}}}}$. Then, the $\mathcal{PMP}$ should be solved again with the updated $\Pi^{(2)}_{\text{{\tiny{MP}}}}$.

{Unlike classical decomposition methods, which generates optimality cuts to form lower bounds for a relaxed master problem, our policy-based decomposition framework does not rely on explicit dual information or linear relaxations. Instead, the refinement of the master policy space is driven by two components: (i) feasibility checks via the subproblem, which prune incompatible policies, and (ii) the cost-to-go which guides selection among candidate delivery-stage decisions. While these mechanisms do not yield formal bounds in the classical sense, they enable informed and structured exploration of the policy space under dynamic uncertainty.}

\begin{claim}[The iterative approach obtains an optimal policy]\label{th:The iterative approach converges to the optimal}
Provided that the sets $\Pi^{(1)}$ and $\Pi^{(2)}$ are finite, the iterative approach described above obtains an optimal policy $\pi^* = (\pi^{(1)*}, \pi^{(2)*})$ that satisfies Eq.\eqref{eq:optimal policy} (assuming that $\Pi \neq \emptyset$).
\end{claim}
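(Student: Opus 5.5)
The plan is a finite-termination argument together with an invariant, in the style of classical cutting-plane convergence proofs. The invariant is that the master set always satisfies $\Tilde{\Pi}^{(2)} \subseteq \Pi^{(2)}_{\text{{\tiny{MP}}}} \subseteq \Pi^{(2)}$. At initialization this holds trivially, since $\Pi^{(2)}_{\text{{\tiny{MP}}}} = \Pi^{(2)}$.

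The invariant is preserved by every pruning step. The $\mathcal{PSP}$ removes a policy only after concluding that $\Pi^{(1)}_{\text{{\tiny{SP}}}}(\pi^{(2)*}_{\text{{\tiny{MP}}}})=\emptyset$. By the definition of $\Tilde{\Pi}^{(2)}$, any such policy is not in $\Tilde{\Pi}^{(2)}$. If other second-stage policies are excluded together with it, I will require, as the cuts are described, that they are likewise shown to have no compatible first-stage policy. So no element of $\Tilde{\Pi}^{(2)}$ is ever removed. Because $\Pi \neq \emptyset$, some compatible pair exists, hence $\Tilde{\Pi}^{(2)} \neq \emptyset$ and the master set never becomes empty.

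Termination follows from finiteness of $\Pi^{(2)}$. Each iteration either ends with $\mathcal{PSP}$ returning a compatible $\pi^{(1)}_{\text{{\tiny{SP}}}}$, or strictly shrinks $\Pi^{(2)}_{\text{{\tiny{MP}}}}$ by removing at least the current $\pi^{(2)*}_{\text{{\tiny{MP}}}}$. The latter can happen at most $|\Pi^{(2)}|-|\Tilde{\Pi}^{(2)}|$ times. I also need each $\mathcal{PSP}$ to be decidable in finite time; this holds by finiteness of $\Pi^{(1)}$, since one can enumerate $\pi^{(1)}$ and check compatibility. In addition, each $\mathcal{PMP}$ must attain its minimum, which it does because it minimizes over a finite nonempty set.

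Optimality at termination is the final step. Let $\hat\pi^{(2)}$ be the final master solution and $\hat\pi^{(1)}$ the compatible policy returned by the subproblem, so $(\hat\pi^{(1)},\hat\pi^{(2)})\in\Pi$. Write $J(\pi^{(2)})$ for the expected cost in Eq.~\eqref{eq:optimal policy}, which depends only on $\pi^{(2)}$. Since $\hat\pi^{(2)}$ minimizes $J$ over $\Pi^{(2)}_{\text{{\tiny{MP}}}}\supseteq\Tilde{\Pi}^{(2)}$, we get $J(\hat\pi^{(2)})\le J(\pi^{(2)})$ for every $(\pi^{(1)},\pi^{(2)})\in\Pi$, because each such $\pi^{(2)}\in\Tilde{\Pi}^{(2)}$. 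Hence $(\hat\pi^{(1)},\hat\pi^{(2)})$ attains the minimum in Eq.~\eqref{eq:optimal policy}, and $\hat\pi^{(2)}$ also solves Eq.~\eqref{eq: alternative problem formulation}.

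The step I expect to need the most care is the soundness of the cuts, namely the phrase ``and possibly other second-stage policies.'' The proof must make explicit the assumption that only policies certified to lie outside $\Tilde{\Pi}^{(2)}$ are pruned. I will state that assumption up front and then prove the invariant by induction on the iterations.
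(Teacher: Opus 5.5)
Your proposal is correct and follows the paper's proof. Termination holds because each feasibility cut strictly shrinks the finite set $\Pi^{(2)}_{\text{{\tiny{MP}}}}$, and optimality holds because $\mathcal{PMP}$ minimizes over a set containing $\Tilde{\Pi}^{(2)}$ while $\mathcal{PSP}$ certifies that the minimizer itself lies in $\Tilde{\Pi}^{(2)}$. You make explicit what the paper's two-line argument leaves implicit (the soundness invariant $\Tilde{\Pi}^{(2)}\subseteq\Pi^{(2)}_{\text{{\tiny{MP}}}}$ under multi-policy cuts, nonemptiness of the master set from $\Pi\neq\emptyset$, and attainment of the minimum over a finite set), and, like the paper, you rely on the modeling premise behind Eq.~\eqref{eq: MP policy} that the expected cost depends on $\pi^{(2)}$ alone.
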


\begin{proof}
Each refinement of $\Pi^{(2)}_{\text{{\tiny{MP}}}}$ ensures a strictly smaller feasible set, and because $\Pi^{(2)}$ is finite, our approach will terminate after a finite number of steps. Upon termination, the resulting pair $(\pi^{(1)}_{\text{{\tiny{SP}}}}, \pi^{(2)*}_{\text{{\tiny{MP}}}})$ is optimal because $\mathcal{PMP}$ minimizes the cost over the refined feasible set, and $\mathcal{PSP}$ ensures feasibility.
\end{proof}

Building on the theoretical basis above, we demonstrate how the $\mathcal{PMP}$ and $\mathcal{PSP}$ can be practically solved within a sequential decision-making framework, as common in dynamic optimization (e.g., via Bellman equations).
To this end, we define $\mathcal{X}_{\text{{\tiny{MP}}}}$ and $\mathcal{X}_{\text{{\tiny{SP}}}}$ below as the sets of delivery and preparation decisions that correspond to $\Pi^{(2)}_{\text{{\tiny{MP}}}}$ and $\Pi^{(1)}_{\text{{\tiny{SP}}}}$, respectively.
\begin{equation}
\mathcal{X}_{\text{{\tiny{MP}}}}(S_k^{(2)}) =  \{ \pi^{(2)}(S_k^{(2)}) \mid  \pi^{(2)} \in \Pi_{\text{{\tiny{MP}}}}^{(2)} \}, \quad \forall S_k^{(2)}  
\end{equation}
\begin{equation}\small
\mathcal{X}_{\text{{\tiny{SP}}}}(S_k,x_k^{(2)}) =  \{ \pi^{(1)}(S_k^{(1)}) \mid   \pi^{(1)} \in \Pi^{(1)}_{\text{{\tiny{SP}}}}(\pi^{(2)}_{\text{{\tiny{MP}}}}), \text{ } \pi^{(2)}_{\text{{\tiny{MP}}}} \in \Pi_{\text{{\tiny{MP}}}}^{(2)} \text{ such that } \pi^{(2)}_{\text{{\tiny{MP}}}}(S_k^{(2)}) = x_k^{(2)}\} , \quad \forall S_k, x_k^{(2)} 
\end{equation}
The presentation of the latter can also be simplified to $\mathcal{X}_{\text{{\tiny{SP}}}}(S_k,x_k^{(2)})= \{x_k^{(1)} \in \mathcal{X}^{(1)}(S_k^{(1)}) \mid (x_k^{(1)}, x_k^{(2)}) \text{ are compatible} \}$.

Then, solving $\mathcal{PMP}$ can be done by addressing an alternative representation of the $\mathcal{PMP}$, referred to as $\mathcal{MP}$, defined as follows.

\paragraph{Master-Problem ($\mathcal{MP})$:} 
Find $\pi^{(2)*}_{\text{{\tiny{MP}}}}$, the solution to $\mathcal{PMP}$, by solving:
\begin{equation}\label{eq: MP seq}
\mathcal{MP}: \quad \pi^{(2)*}_{\text{{\tiny{MP}}}}(S_k^{(2)}) \in \argmin_{x_k^{(2)} \in \mathcal{X}_{\text{{\tiny{MP}}}}(S_k^{(2)})} \{\mathbb{C}(S_k^{(2)}, x_k^{(2)}) + V_{\text{{\tiny{SP}}}}(S_k,x_k^{(2)})\}, \quad \forall S_k = (S_k^{(1)}, S_k^{(2)}),
\end{equation}

\noindent
where the second component of the objective, \(V_{\text{{\tiny{SP}}}}(S_k, x_k^{(2)})\), represents the expected future costs obtained by solving the Sub-Problem (\(\mathcal{SP}\)), as defined below.
%($V(S^x_k)$ for $x_k^(1)$ and $x_k^(2)$ obtained by \(\mathcal{MP}\) and \(\mathcal{SP}\)) 
Notably, while the minimization is performed over \(x_k^{(2)}\), the term \(V_{\text{{\tiny{SP}}}}\) introduces a dependency on \(x_k^{(1)}\), coupling the two decision stages.
The formulation in \eqref{eq: MP seq} solves the $\mathcal{PMP}$ by incorporating dynamic state transitions and future cost evaluations, aligning with the sequential decision-making framework. 

\paragraph{Sub-Problem ($\mathcal{SP}$):}
The Sub-Problem defined according to the sequential decision making framework, searches for a solution that is not only a compatible first-stage policy but also accounts for the expected future cost through its impact on the feasible delivery-stage policies. 
Thus, 
\begin{equation}\label{eq: SP seq}
\mathcal{SP}: \quad V_{\text{{\tiny{SP}}}}(S_k,x_k^{(2)}) = \min_{x_k^{(1)} \in \mathcal{X}_{\text{{\tiny{SP}}}}(S_k,x_k^{(2)})}
\mathbb{E}[V(S_{k+1}) \mid S_k, x_k=(x_k^{(1)}, x_k^{(2)})]
\end{equation}
If $\mathcal{X}_{\text{{\tiny{SP}}}}(S_k,x_k^{(2)}) = \emptyset$, the corresponding \(x_k^{(2)}\) is not feasible, and it is required to generate feasibility cuts to prune it (and possibly other decisions) from $\mathcal{X}_{\text{{\tiny{MP}}}}(S_k^{(2)})$.
This sequential decision making formulation provides an optimal solution, but it now requires evaluating the effect of $x_k^{(1)}$ on future costs rather than evaluating only policies $\pi_2$. 

To conclude, our decomposition method decouples the problem into a Policy Master-Problem and a Policy Sub-Problem. By that, it systematically reduces computational efforts, solving smaller, more focused problems iteratively rather than the entire problem at once. For convenience, \ref{app:notation} summarizes the notation presented thus far.

\section{The Decomposition-Driven Solution Framework}\label{sec: solution method description}

This section introduces the \textit{DDF-VFA}, developed to efficiently solve the DOFP by leveraging the decomposition structure outlined in Section \ref{sec: decomposition}. The framework is composed of two solution approaches: one targeting the \(\mathcal{MP}\) in \eqref{eq: MP seq}, and the other addressing the \(\mathcal{SP}\) in \eqref{eq: SP seq}, for each state \(S_k = (S_k^{(1)}, S_k^{(2)})\).

A main challenge in solving the \(\mathcal{MP}\) is the vast decision space, \(\mathcal{X}_{\text{{\tiny{MP}}}}(S_k^{(2)})\). Thus, an efficient solution mechanism is required to balance computational efforts and solution quality, as these decisions determine both immediate and future costs. Building on the concept of partial solutions introduced by \citet{NeriaGhost2024}, where partial solutions were explored across multiple stages without decomposition, our approach focuses exclusively on partial solutions for the second stage. Specifically, we propose a heuristic method based on an LNS (large neighborhood search) algorithm to explore partial delivery decisions (sequences of trips excluding departures scheduling and vehicle assignments) while completing them using a separate innovative algorithm, as detailed in \ref{sec: Heuristic to MP}.

Solving the $\mathcal{SP}$, given a decision $x^{(2)}_k$, presents significant challenges. To tackle this, we propose a fast heuristic approach, named the Synchronized Preparation Scheduling (SPS) algorithm, designed to efficiently generate a feasible decision,  $x_k^{(1)} \in \mathcal{X}_{\text{{\tiny{SP}}}}(S_k, x_k^{(2)})$. If no feasible $x_k^{(1)}$ is found, the algorithm generates feasibility cuts to refine the $\mathcal{MP}$ (Claim \ref{th:The iterative approach converges to the optimal}).
An additional challenge in \eqref{eq: SP seq} lies in the computational effort required to evaluate the cost $\mathbb{E}[V(S_{k+1}) \mid S_k, x_k=(x_k^{(1)}, x_k^{(2)})]$ for each considered compatible decision $x^{(1)}_k$. To address this, we incorporate a VFA (value function approximation) using neural networks (NNs) \citep{NeriaTzur2022}. 

Because the first-stage decision influences the objective value only indirectly through its impact on future second-stage feasible decisions, the SPS employs a fast algorithm that prioritizes identifying feasible first-stage decisions without exhaustively optimizing them. By that, it enables saving limited real-time computational resources to second-stage delivery decisions in the $\mathcal{MP}$. That is, the \textit{DDF-VFA} leverages the structure of the DOFP problem class, where costs are incurred in the second stage and envisions an optimization framework that focuses on these second-stage decisions.

\begin{figure}[t!]
    \centering
    \caption{The DDF-VFA overview. {Contrary to the benchmark that jointly searches, the combined $(x_k^{(1)}, x_k^{(2)})$ decision space, DDF-VFA delegates $x_k^{(1)}$ to a feasibility-only subproblem (SPS) and concentrates the search effort on $x_k^{(2)}$, where all variable cost is incurred.}}    \scalefigure{figures/two stage solution overview figure.pdf}
    \label{fig:framework_interaction}
\end{figure}
The \textit{DDF-VFA} facilitates a structured interaction between the \(\mathcal{MP}\) and the \(\mathcal{SP}\), iteratively refining the solution space to ensure compatibility between preparation and delivery stages while optimizing computational efficiency. Figure~\ref{fig:framework_interaction} illustrates this workflow. At each iteration, the framework generates a delivery-stage decision \(x_k^{(2)}\) and searches for a compatible preparation-stage decision \(x_k^{(1)}\). If a feasible \(x_k^{(1)}\) is identified, the combined decision \( (x_k^{(1)}, x_k^{(2)}) \) is evaluated based on its immediate cost, \(\mathbb{C}(S_k^{(2)}, x_k^{(2)})\), and its estimated future cost, denoted \(\hat{V}(S_k^x)\), given by the VFA. This iterative process continues until a stopping criterion is met, yielding the best decision found \(x_k^*\) for state \(S_k\).
In the following subsections, we provide more details on the solution approaches for the \(\mathcal{MP}\) (\ref{sec: Heuristic to MP}) and the \(\mathcal{SP}\) (\ref{sec: Heuristic to SP}). A psuedocode is provided in \ref{app: Pseudocode for DDF-VFA}.

\subsection{A Solution Approach to the MP}\label{sec: Heuristic to MP}
\noindent To address the $\mathcal{MP}$ we develop an LNS algorithm which creates \emph{partial delivery decisions}, denoted as \(\boldsymbol{\theta} = (\theta_{(1)}, \theta_{(2)}, \dots)\), representing sequences of trips across all vehicles. These partial delivery decisions do not include the specific vehicle assignments and departure times. For the example in Figure~\ref{fig:example}, such partial delivery decisions may be $((1,4,2),(3,6,5))$ or $((1,4),(2,5),(3,6))$, etc. The latter means that first a vehicle starts a trip $\theta_{(1)}$ to deliver orders 1 and 4, then a vehicle starts a trip $\theta_{(2)}$ to deliver orders 2 and 5, and finally a vehicle starts a trip $\theta_{(3)}$ to deliver orders 3 and 6. The exact vehicle assignments and departure times remain unspecified, and then each partial delivery decision is passed to a dedicated algorithm, the Trip Assignment and Scheduling (TAS), which complements those partial delivery decisions to $x_k^{(2)} \in \mathcal{X}_{\text{{\tiny{MP}}}}(S_k^{(2)})$. 
 
\subsubsection{The Large Neighborhood Search.}\label{sec: LNS}

\noindent The LNS framework starts with an initial decision, based on appending the new order to the plan in $S_k$, using a first-in-first-out heuristic, referred to as the \textit{FIFO} benchmark, which we present later in Section \ref{sec: benchmarks}. The LNS then employs problem-specific operators in each iteration to explore the space of partial delivery decisions {(descriptions of the operators can be found in ~\ref{app: LNS Adjustments})}. These operators modify the grouping of orders into trips, the sequence of order visits within each trip, and the sequence of trip departures. During the generation process, the LNS ensures that each trip is at least \emph{feasible in isolation}, meaning it adheres to basic constraints such as capacity limits.

\subsubsection{The Trip Assignment and Scheduling Algorithm.} \label{sec: the TAS}

The TAS transfers the partial delivery decision $\boldsymbol{\theta}$ into a (complete) second-stage decision $x^{(2)}_k$. First, it checks whether $\boldsymbol{\theta}$ can potentially become a decision $x^{(2)}_k \in \mathcal{X}_{\text{{\tiny{MP}}}}(S_k^{(2)})$. If it can, then it sequentially generates departure times and vehicle assignments for each trip in $\boldsymbol{\theta}$, where after each such assignment, it verifies feasibility, given the current state, for the already scheduled trips with a potential compatible decision $x_k^{(1)}$, see below the detailed steps. 

\paragraph{Initial Check}
The first step is to verify whether the partial delivery decision \(\boldsymbol{\theta}\) satisfies the delivery-stage constraints \(\mathcal{Q}_{\text{deliv}}(S_k^{(2)})\). This check identifies any obvious conflicts, such as orders necessitating departure times that violate vehicle availability. 
If \(\boldsymbol{\theta}\) fails this check, it is deemed infeasible, and the LNS generates a new partial delivery decision. If the check passes, the algorithm proceeds to the next phase.

\paragraph{Assigning Departure Times}  
Departure times are assigned sequentially for each trip in \(\boldsymbol{\theta}\). Let \(l = 1, \dots, |\boldsymbol{\theta}|\) represent the index of trips in \(\boldsymbol{\theta}\). The algorithm follows the following steps:
\begin{enumerate}
    \item \textit{Current Trip Assignment:} The departure time of the current trip \(\theta_{(l)}\) is scheduled as early as possible, considering the availability of vehicles and the restrictions of orders in the trip, e.g., the lower bounds on orders' ready times (after preparation).
    \item \textit{First-Stage and Synchronization Feasibility:} Once a departure time for the current trip is assigned, the potential of reaching a decision $x_k^{(2)}$ that has a compatible decision $x_k^{(1)}$ is evaluated by considering all orders in trips \(\theta_{(1)}, \dots, \theta_{(l)}\). That is, the feasibility of synchronizing these trips with the preparation-stage is verified for their departure times that were set in Step 1. This check is performed using the SPS algorithm (see Section~\ref{sec: the SPS}). If feasibility is confirmed, the TAS proceeds to the next trip.
    \item \textit{Postponing Departure Times:} If feasibility is not achieved, the departure time for the current trip is incrementally postponed (e.g., by one time-unit), and feasibility is re-evaluated in Step 2. 
\end{enumerate}
The postponement in Step 3 may cascade to subsequent trips, as \(\boldsymbol{\theta}\) specifies a sequence of trips. If \(\theta_{(l)}\) requires a later departure time, all trips \(\theta_{(l+1)}, \dots, \theta_{(|\boldsymbol{\theta}|)}\) must also depart no earlier than the adjusted time of \(\theta_{(l)}\). This cascading adjustment eliminates from consideration a larger set of infeasible policies, streamlining the search process.
If this postponement renders \(\boldsymbol{\theta}\) infeasible, the TAS terminates, and the LNS generates a new partial decision. Otherwise, the algorithm produces a feasible complete decision \(x_k^{(2)}\) for evaluation.

{The TAS can straightforwardly accommodate several optional features as small additions to its initial check, departure-assignment, or vehicle-assignment steps; the only case when a substantive extension to TAS is required is when the objective function minimizes soft-delivery-window violations, which leads to a DP-based refinement. \ref{app: TAS feature extensions} details examples of several per-feature adjustments (including the DP-refinement).}

\subsection{A Solution Approach to the SP}\label{sec: Heuristic to SP}

The \(\mathcal{SP}\) obtains the delivery-stage decision \(x_k^{(2)}\) found by \(\mathcal{MP}\) in \ref{sec: Heuristic to MP}. Towards that, we describe the SPS algorithm that aims to generate feasible preparation-stage decisions efficiently and a VFA framework to evaluate decisions. These components are described in the following subsections.

\subsubsection{Synchronized Preparation Scheduling Algorithm.} \label{sec: the SPS}
The SPS algorithm determines a compatible preparation-stage decision \(x^{(1)}_k \in \mathcal{X}_{\text{{\tiny{SP}}}}(S_k, x_k^{(2)})\), or asserts that no compatible decision exists. To do so, it first computes feasible starting preparation time windows \([a_i, b_i]\) for each order \(i \in I_k^{(1)}\). These windows are derived solely from the synchronization constraints \(\mathcal{Q}_{\text{sync}}(S_k)\). For example, we ensure that preparation is completed before the associated delivery trip departs in \(x_k^{(2)}\) by setting \(b_i = t^d_{\theta(i)} - t_i^p\), where \(t^d_{\theta(i)}\) is the departure time of the trip containing order \(i\). Within these time windows, the solution $x^{(1)}_k$ needs to satisfy only preparation-stage constraints, \(\mathcal{Q}_{\text{prep}}(S_k^{(1)})\).
We obtain a MILP formulation, which can be solved optimally or heuristically by traditional operations research tools. The following steps summarize our algorithm.

First, select order $i$ from the set of all unassigned orders according to earliest due date (EDD), break a tie by SPT. Then, attempt to assign order $i$ to the resource that minimizes the setup time from its last assigned order. For this assignment to be considered, the algorithm checks whether it is possible to feasibly assign the next two orders to any available resources after assigning order $i$. This lookahead step ensures that the selected machine and its updated availability after order $i$ can still accommodate at least two future jobs without violating their due dates.
If no resource can feasibly assign order $i$ while maintaining the feasibility of the next two orders, return "infeasible." If the assignment is successful, update the availability of the selected resource, and repeat the process for the next order, using the same procedure for assigning jobs and checking future feasibility.

{Note that when no sequence-dependent setups exist between orders the procedure reduces to a standard EDD-based assignment heuristic (without lookahead). Concretely, it is equivalent to the H1 method of \citet{ho1995minimizing}, originally proposed for parallel-machine scheduling: orders are sorted by EDD (ties broken by shortest processing time, SPT) and each order \(i\) is assigned to the busiest available resource, i.e., the one with the latest availability that can complete it without tardiness; if no such resource exists, the instance is declared infeasible. Otherwise the resource's availability is updated and the next order is processed.}

{Beyond the implementation described above, the SPS accommodates several other optional features either as a parameter-style update to its scheduling rule (heterogeneous resources) or as a full algorithmic substitution of the SPS itself (e.g., batched preparation or multi-stage preparation). See \ref{app: SPS feature extensions} for more details.}

The SPS results in a preparation decision $x_k^{(1)}$, which together with the $\mathcal{MP}$ outcome, $x_k^{(2)}$, form the decision $x_k$. This decision is used to approximate the future cost as described next.

\subsubsection{Future Cost Approximation.}\label{sec: VFA} 
In Eq.\eqref{eq: SP seq}, the value of each considered $x_k^{(1)}$, $\mathbb{E}[V(S_{k+1}) \mid S_k, x_k]$, denoted as \( V(S_k^x)\), represents the cost-to-go of the post-decision state $S_k^x$, obtained from \( S_k \) and \( x_k = (x_k^{(1)}, x_k^{(2)})\).
% The value of a post-decision state, denoted as \( V(S_k^x) = \mathbb{E}[V(S_{k+1}) \mid S_k, x_k] \), represents the cost-to-go given \( S_k \) and \( x_k \). 
In the \textit{DDF-VFA}, the exact cost-to-go is computationally prohibitive to calculate within a reasonable time frame. Hence, we employ NN-based VFA for fast estimation.  NNs are able to approximate complex, nonlinear relationships between post-decision-states and their future costs, making them well-suited for the DOFP. The VFA predicts expected costs given aggregated post-decision states, which summarize key details such as resources and vehicle availability, along with the current time \( t_k \), thus enabling scalability and transfer learning. The VFA is refined offline through training, iteratively minimizing prediction errors between estimated and observed costs in a simulated environment using RL. Once trained, the model is applied online to evaluate decisions, enabling efficient decision-making and dynamic schedule updates.
The VFA output, denoted as \(\hat{V}(S_k^x)\), integrates seamlessly into the \textit{DDF-VFA}, enhancing its ability to balance immediate and future costs effectively. Implementation details, including feature selection and architecture, are consistent with \citet{NeriaGhost2024}.

\section{Dynamic Fulfillment Problem Variants}\label{sec:model_problems}
We present two problem variants as case studies within the DOFP class. These variants were chosen to demonstrate the versatility of the DOFP class across diverse domains by addressing real-world challenges and highlighting structural differences.
Each variant emphasizes a distinct operational focus: the Picking Variant minimizes travel time and penalized delay; and the Production Variant minimizes soft time window violations while incorporating sequence-dependent setup constraints. Together, these variants showcase the framework’s adaptability to diverse objectives, constraints, and decision spaces.

To formally ground these distinct operational characteristics within the DOFP framework, we describe a MILP formulation to rigorously define and bound the feasible decision space $\mathcal{X}(S_k)$ given an individual system state $S_k$ at decision epoch $k$. In particular, the below MILP defines the feasible preparation decisions \(X^{(1)}(S_k^{(1)})\), delivery decisions \(X^{(2)}(S_k^{(2)})\), synchronization constraints \(\mathcal{Q}_{\text{sync}}(S_k)\), and the objective function \(J(\Theta, t^d)\) introduced abstractly in \ref{sec: MDP}. While the theoretical breadth of the broader DOFP class makes a single, exhaustive MILP formulation that captures every possible permutation mathematically impractical, the unified MILP below is designed specifically to encompass the structural components of our two example variants.

\subsection{Formulation of the Decision Space at a Given State in the Example Variants}\label{sec: MILP formulation}
The constraints below are written in a feature-parameterized form allowing each example variant to be recovered by fixing the values of the parameters \(s_{ij}\), \(d_i\), and \((a_i, b_i, \rho_e, \rho_l)\) introduced below.  

\paragraph{Preparation Stage}
At each decision point \( k \), the preparation state \( S_k^{(1)} \) consists of all orders \( i \in I^{(1)}_k \), each characterized by a processing time \( t^p_i \). {The preparation resources form a set \(C\) of homogeneous/hetrogeneous units (the concrete role - e.g., pickers or production machines - is variant-specific).} The preparation assignment plan \( \Psi_k \) defines the sequence of orders allocated to each resource, with \( \Psi_{kc} \) denoting the assignments specific to resource \( c \in C \). The set \( t^s_k \) represents the start times for processing each order.

The feasible decision set \( X^{(1)}(S^{(1)}_k) \) is formulated as a MILP. We define a binary variable \( z_{ict} \in \{0,1\} \), where \( z_{ict} = 1 \) indicates that resource \( c \) starts processing order \( i \) at time \( t \). Note that the preparation decision \( x^{(1)}_k = (\Psi_k^x, t_k^{s,x}) \) can then be derived from the values of \( z_{ict} \) for all \( i \), \( c \), and \( t \). Constraints \eqref{con:2}–\eqref{con:6} define the set of first-stage constraints \( \mathcal{Q}_{\text{prep}}(S_k^{(1)}) \):
\begin{align}
    \sum_{c\in C} \sum_{t=t^o_i}^T z_{ict} &= 1, \quad \forall i \in I_k^{(1)} \quad \text{(order assigned once)} \label{con:2}\\
    \sum_{i\in I_{k}^{(1)}} z_{ict} &\leq 1, \quad \forall c \in C, t_k \leq t \leq T \quad \text{(resource capacity)} \label{con:3}\\
    \sum_{j\in I^{(1)}_{k}} \left({t^\prime} + t^p_j + s_{ji}\right) z_{jc{t^\prime}} &\leq T + z_{ict}(t -T), \quad \forall i, c, t_k \leq t^\prime < t \leq T \quad \text{(resource availability)} \label{con:4}\\
    z_{ict} &= 1, \quad \forall c \in C, i \in \Psi_{kc}, t \in \{t^s_{ik}\leq t_k \mid i\in I_k^{(1)}\} \label{con:5}\\
    z_{ict} &\in \{0,1\}, \quad \forall i \in I_k^{(1)}, c \in C, t_k \leq t \leq T \quad \text{(binary definition)} \label{con:6}
\end{align}
These constraints ensure that each order in \( I_k^{(1)} \) is assigned exactly once, with no overlap in assignments for any resource at any time, and that tasks already started remain unchanged. {In Constraint~\eqref{con:4} we introduce a setup time \(s_{ji}\).  Setting \(s_{ji}=0\) for all \(i,j\) recovers a no-setup form.} 

\paragraph{Delivery Stage}
At each decision point \( k \), the delivery state \( S_k^{(2)} \) characterizes each order \( i \in I^{(2)}_k \) by its location \( l_i \) and placement time \( t^o_i \). {Orders are transported by vehicles \(v \in V\) with a capacity of \( \kappa \) units, and each order \(i\) consumes \(d_i\) units of capacity.}

The feasible decision set \( X^{(2)}(S^{(2)}_k) \) is also formulated as a MILP. The binary variable \( y_{ivt} \in \{0,1\} \) indicates whether vehicle \( v \) departs with order \( i \) at time \( t \). Another binary variable, \( q_{ijt} \in \{0,1\} \), specifies whether orders \( i \) and \( j \) are delivered sequentially on a trip departing at time \( t \). The discrete variable \( t^a_i \in [t_k, T] \) represents the arrival time of order \( i \) at the customer's location, while \( t^r_{vt} \in [0, T] \) denotes the return time of vehicle \( v \) if dispatched at time \( t \).
Note that the delivery decision \( x^{(2)}_k = (\Theta_k^x, t_k^{d,x}) \) can then be derived from the values of these decision variables.
The constraints defining \( \mathcal{Q}_{\text{deliv}}(S_k^{(2)}) \) are:
\begin{alignat}{3}
    &\sum_{v\in V} \sum_{t=t_k}^T y_{ivt} = 1, & \quad \forall i \in I^{(2)}_k & \quad \text{(assign each order to one trip)} \label{con:16}\\
    &(y_{ivt} - 1) \cdot T + t^a_i + t^t_{i0} \leq t^r_{vt}, & \quad \forall i, v, t_k \leq t \leq T & \quad \text{(vehicle availability)} \label{con:7}\\
    &t^r_{vt'} \leq y_{ivt} \cdot t + T(1 - y_{ivt}), & \quad \forall i, v, t_k \leq t' < t \leq T & \quad \text{(time sequencing)} \label{con:8}\\
    &\sum_{i\in I^{(2)}_k} d_i \cdot y_{ivt} \leq \kappa, & \quad \forall v, t_k \leq t \leq T & \quad \text{(capacity constraints)} \label{con:9}\\
    &\sum_{i\in I^{(2)}_k} y_{ivt} - 1 \leq \sum_{j \in I^{(2)}_k, j \neq i} q_{jit}, & \quad \forall v, t_k \leq t \leq T & \quad \text{(trip sequence)} \label{con:10}\\
    &2 \cdot q_{jit} \leq y_{ivt} + y_{jvt}, & \quad \forall i, j, v, t_k \leq t \leq T & \quad \text{(trip inclusion)} \label{con:11}\\
    &(q_{ijt} - 1) \cdot T + t^t_{ij} \leq t^a_j - t^a_i, & \quad \forall i \neq j, t_k \leq t \leq T & \quad \text{(no sub-tours)} \label{con:12}\\
    &t \cdot y_{ivt} + t^t_{0i} \leq t^a_i, & \quad \forall i, v, t_k \leq t \leq T & \quad \text{(time feasibility)} \label{con:13}\\
    &y_{ivt}, q_{ijt} \in \{0,1\}, & \quad \forall i, j \in I^{(2)}_k, v \in V, t_k \leq t \leq T & \quad \text{(binary definitions)} \label{con:14}
\end{alignat}
Constraint \eqref{con:16} ensures each order in \( I_k^{(2)} \) is assigned to exactly one trip, while constraints \eqref{con:7}–\eqref{con:14} enforce vehicle availability, capacity, and sequence feasibility, eliminating sub-tours. {The demand size \(d_i\) in Constraint~\eqref{con:9} represents the general case; setting \(d_i = 1\) for all \(i\) recovers the unit-size form.}

\paragraph{Combined Decision Set}
The feasible decision set \( \mathcal{X}(S_k) \) has to satisfy the preparation constraints \( \mathcal{Q}_{\text{prep}}(S^{(1)}_k) \), the delivery constraints \( \mathcal{Q}_{\text{deliv}}(S^{(2)}_k) \), and the synchronization constraints \( \mathcal{Q}_{\text{sync}}(S_k) \), with the latter specified by Constraint \eqref{con:17}. This synchronization constraint ensures that orders are only delivered once the preparation is completed.
\begin{equation}\label{con:17}
\sum_{c\in C} \sum_{t=1}^T (t + t^p_i) \cdot z_{ict} \leq \sum_{v\in V} \sum_{t=1}^T t \cdot y_{ivt}, \quad \forall i \in I_k.
\end{equation}

\paragraph{The Objective}
{The objective we consider in our example variants is minimizing a combination of penalized total travel time and deviation of each delivery from a target service window. We write the cost in a unified, feature-controlled form, allowing both variants of the DOFP class to be recovered by setting specific parameters. To that end, define the travel time penanlty \(\rho_t\) and consider a target service window \([a_i, b_i]\) for each order \(i\), with per-time-unit earliness and lateness penalties \((\rho_e, \rho_l)\). The total cost of a given delivery plan \((\Psi, t^s, \Theta, t^d)\) is}
\begin{equation}\label{eq: unified objective}
{{J}(\Theta, t^d) = \rho_t\sum_{\theta \in \bigcup \Theta} \sum_{l=1}^{|\theta|-1} t^t_{i^\theta_l, i^\theta_{l+1}} \; + \sum_{\theta \in \bigcup \Theta} \sum_{j=1}^{|\theta|} \left[ \rho_e \max\!\big(0,\, a_{i^\theta_j} - \hat{t}^a_{\theta,j}\big) + \rho_l \max\!\big(0,\, \hat{t}^a_{\theta,j} - b_{i^\theta_j}\big) \right],}
\end{equation}
{where \(\hat{t}^a_{\theta,j} = t^d_\theta + \sum_{l=1}^{j-1} t^t_{i^\theta_l, i^\theta_{l+1}}\) denotes the arrival time of the \(j\)-th order on trip \(\theta\). The first term accounts for total travel time; the bracketed term penalizes earliness and lateness relative to each order's window.}
Then, the immediate cost reflects the marginal cost resulting from updating the plan in $S^{(2)}_k$, $(\Theta_k, t^d_k)$, to the new plan according to $x^{(2)}_k$, $(\Theta_k^x, t^{d,x}_k)$, and can be obtained by:
\begin{equation}\label{eq: immediate cost}
\mathbb{C}(S^{(2)}_k,x^{(2)}_k) = {J}(\Theta_k^x, t^{d,x}_k)  - {J}(\Theta_k, t^d_k)
\end{equation}
The sum of immediate costs across all observed states equals the total realized costs over the entire period.

We emphasize again that the above framework supports other constraints or objective functions as long as the seven conditions in \ref{sec:Class Description} are met. Additional supported {example} features (e.g., batching, heterogeneous resources, perishability constraints) are presented in~\ref{app: MILP extensions} as extensions to the MILP formulation.

\subsection{The Picking Variant}\label{sec: problem statement Picking Variant}
This variant follows \citet{d2024integrating} and models an order picking environment within a warehouse tightly coupled with outbound fleet delivery logistics, where individual vehicle trips serve multiple drop-off destinations sequentially. The primary objective is to minimize total vehicle travel times alongside penalized delivery tardiness. 

As detailed in the comprehensive literature review of Section~2.1, real-time operating costs in this specific layout are predominantly concentrated in the delivery routing stage, while the picking capacity remains constrained on a rolling workforce schedule (e.g., \citealp{neves2019solving, rijal2023dynamics}). Grounded in these industry structures, and motivated by the myopic, non-anticipatory approaches of \citet{rijal2023dynamics} and \citet{d2024integrating}, we compare our framework directly to an \textit{Integrated} benchmark policy that optimizes both fulfillment stages concurrently but lacks long-term stochastic foresight regarding future order arrivals.

{The Picking Variant is formally obtained from the unified mathematical formulation presented in Section~\ref{sec: MILP formulation} by enforcing the following parameter state: sequence-dependent setups are omitted (\(s_{ji}=0\) for all \(i,j\)), orders possess unit capacity requirements (\(d_i = 1\) for all \(i\)), and the preparation resources \(c \in C\) act as a homogeneous picker workforce. The service target simplifies to a one-sided commitment derived from a click-to-door promise interval \(\tau\), specializing the objective parameters to \(a_i = 0\), \(b_i = t^o_i + \tau\), \(\rho_t = 1\), \(\rho_e = 0\), and \(\rho_l := \rho>0\).}

\subsection{The Production Variant}\label{sec: problem statement Production Variant}
The second variant models a manufacturing-fulfillment pipeline inspired by the scheduling complexities studied in \citet{wu2022variable}. Here, heterogeneous items are processed on a set of homogeneous machines and immediately routed to customers under tight, soft time-window bounds. In contrast to the Picking Variant, this environment features highly constrained sequence-dependent setup switchovers and heterogeneous order volumes that fractionally exhaust vehicle capacities. The primary objective is to minimize total soft time-window violations.

Following the integrated production-routing paradigms reviewed in Section~2.1 (e.g., \citealp{berghman2023review, KERMANI2024}), this problem layout represents an operational setting where dynamic delivery penalties dominate final system performance, yet upstream combinatorial constraint matrices heavily dictate near-term execution feasibility. Evaluating our framework on this variant tests the scalability of the policy-level decomposition under continuous real-time order arrivals paired with severe changeover delays—a setup reflecting rapid-turnaround e-commerce fulfillment centers, modern restaurant kitchens, and custom manufacturing cells.

{The Production Variant is recovered directly from the unified formulation of Section~\ref{sec: MILP formulation} without simplifying structural assumptions. It explicitly retains the full sequence-dependent setup matrix \(s_{ij}\) and heterogeneous order demands \(d_i\). Each order \(i\) is bound symmetrically by an earliest and latest delivery target window \([t^{sw}_i, t^{ew}_i]\), which specializes the objective function parameters to \(a_i = t^{sw}_i\), \(b_i = t^{ew}_i\), \(\rho_t = 0\), and \(\rho_e = \rho_l = 1\), yielding total combined earliness and lateness violations as the underlying cost metric.}
\section{Computational Study}\label{sec: results}
In this section, we present our computational study to test our proposed policy and derive managerial insights. We first give an overview of the instances and benchmark policies in \ref{sec: instances} and \ref{sec: benchmarks}, respectively. We then analyze the objective values of all policies over the two problem variants in \ref{sec: results objective values all}. Further, we analyze problem-oriented aspects and investigate our design-choices in LNS, TAS, and SPS in \ref{sec:Method-Oriented Analysis}.
Additional results, including different instances, are described in \ref{app:additional_results}. %To further demonstrate the generalizability of our framework, we also present an extended case study on a third, priority-weighted pharmacy prescription delivery DOFP variant in~\ref{app: pharmacy variant}.

\subsection{Instances}\label{sec: instances}

We test each problem-variant on a different real-world data set to ensure the robustness of our approach across multiple problems and across multiple data sets, see ~\ref{app: instances}.

\noindent\textbf{Picking.} Based on the \citet{Meituan24} dataset, each instance includes five resources, ten vehicles, and an average of 170.5 daily orders. Orders arrive via a non-stationary Poisson process (with arrival rates changing every two hours) from 8:00 to 24:00, with processing times normally distributed (with a mean of 7 minutes and std of 1 minute). A 60-minute click-to-door delivery promise is applied, with a delay penalty factor of 10 per minute (equating the costs of one minute of delay to ten minutes of travel time).

\noindent\textbf{Production.} Adapted from \citet{ortec2022}, these instances feature on average 200–300 orders per day, with vehicles that have a capacity of 145 units. Orders, locations, and time windows are sampled from the dataset (of time horizon of about 700 minutes). Each order belongs to one of four classes, with setup times of 5 or 10 minutes between orders from different classes and zero between orders from the same class.

\subsection{Benchmark Policies}\label{sec: benchmarks}
We compared \textit{DDF-VFA} against the following four benchmark methods:

\noindent\textbf{(1) FIFO} serves as a baseline method, processing orders in the sequence they arrive without reordering or optimization, used in \citet{liu2022approximate} and as a benchmark in \citet{rijal2023dynamics}. This approach reflects what is often done in practice and helps establish a simple reference for evaluating more advanced methods. 

\noindent\textbf{(2) The Integrated approach} optimizes current operations without accounting for future uncertainties, as implemented in \citet{rijal2023dynamics, d2024integrating}. We implement the benchmark approach developed in \citet{NeriaGhost2024}, which uses an LNS that searches the entire space of partial decisions without assignment and timing via LNS. Assignment and scheduling to specific resources and vehicles are handled by a separate polynomial algorithm. This method is called \enquote{Integrated} because it considers both stages simultaneously.

\noindent\textbf{(3) The AI method} (introduced in \citet{NeriaGhost2024}) builds on the \textit{Integrated} method, and incorporates RL techniques through VFA to evaluate the cost-to-go of decisions, providing a more adaptive and forward-looking approach. The main difference between this method and the \textit{DDF-VFA} is how the decision space is searched. That is, while the \textit{DDF-VFA} employs a decomposition approach and focuses its optimization efforts on the second stage, the \textit{AI} optimizes both stages directly and simultaneously.

\noindent\textbf{(4) DDF-G} (with \enquote{G} denoting \enquote{greedy}) mirrors the structure of our \textit{DDF-VFA} but excludes evaluation via VFA. Decisions are evaluated solely based on immediate costs, without considering future implications. This method allows us to assess the impact of VFA on solution quality.

These benchmarks collectively highlight the trade-offs between simple heuristics, integrated optimization, 
decomposition, and advanced ML based techniques. 

\subsection{Performance of the DDF-VFA}\label{sec: results objective values all}
To evaluate the performance of the \textit{DDF-VFA}, we compare its objective value to the values obtained by the benchmark policies for the two problem variants. To this end, we run 100 realizations for each problem variant and calculate the respective average objective values. We then calculate the relative percentual cost reduction of the \textit{DDF-VFA} as follows:
{\small
\[
\text{\% Cost Reduction } = \left( \frac{\text{Value of Benchmark Policy} - \text{Value of DDF-VFA}}{\text{Value of Benchmark Policy}} \right) \times 100.
\]
}

\begin{figure}[t!] \centering \caption{Average relative cost reduction of \textit{DDF-VFA} across all benchmarks and example problem variants.} 
\includegraphics[width=0.6\textwidth]{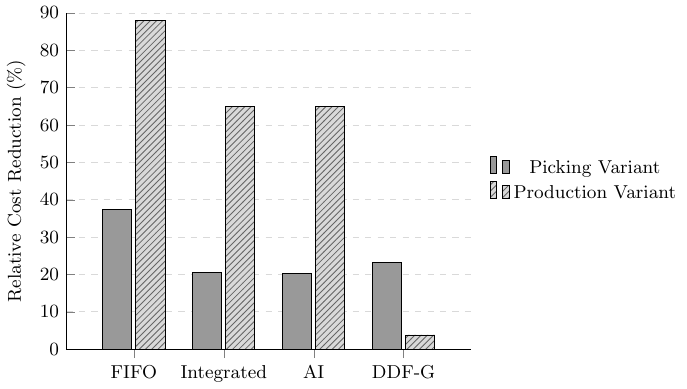}
%\scalefigure{figures/relative_improvement_variants.pdf} 
\label{fig: summary improvement} \end{figure}
\paragraph{Average improvements} The results are shown in Figure~\ref{fig: summary improvement}. We first observe that  \textit{DDF-VFA} improves upon all benchmarks for both problem variants, and the improvement is in the range of 3\% to 87\%. The largest average cost reductions are obtained relative to FIFO, followed by \textit{Integrated}, \textit{AI}, and finally \textit{DDF-G}. The magnitude of the improvement is not only due to the improved solution method, but it is also due to the problems' structure. For example, in warehouse picking operations, travel times are a significant and unavoidable cost factor. Thus, the objective values for all policies are large, and the relative improvement becomes smaller. Another notable observation is that \textit{DDF-G} outperforms \textit{AI} in the production variant. In this problem, the time windows complicate the search for effective delivery schedules. Thus, the focus of this search on the delivery stage (as done in \textit{DDF-VFA}) instead of on the entire decision space (as done in \textit{AI}) is particularly beneficial. The time windows may also be the reason for the relatively small improvement of \textit{DDF-VFA} over \textit{DDF-G} as they restrict the decision space, reduce flexibility, and, therefore, the need for explicit anticipation. The diminishing value of anticipation is also supported by the similar performance of \textit{Integrated} and \textit{AI}.

When comparing the two methods that employ anticipation, \textit{AI} and \textit{DDF-VFA}, under similar runtime conditions and the same number of LNS search iterations, \textit{DDF-VFA} consistently yielded higher-quality solutions.
The \textit{AI} approach struggled due to the simultaneous optimization of both stages. These findings highlight the advantage of the decomposition, particularly in real time environments where computational resources are constrained.

To evaluate the impact of the VFA within \textit{DDF-VFA}, we compare its performance against \textit{DDF-G}. The results reveal that incorporating VFA enhances the solution quality especially in the Picking variant where anticipating future decisions is critical. {Quantitatively, in the Picking variant at the highest tested delay penalty ($\rho=50$, Table~\ref{tab: methods performance results Picking Variant}), \textit{DDF-VFA} reduces the total objective from 45{,}392 (under \textit{DDF-G}) to 36{,}024 minutes, a 20.6\% improvement, by trading a small increase in travel time for a substantial reduction in delay. The gain is more modest in the Production variant: soft time windows compress the effective decision space and restrict the recourse available to delay-sensitive decisions, leaving less room for anticipation to help. This is consistent with the diminishing gap between \textit{Integrated} and \textit{AI} in the same variant, which we already noted above.}

\subsection{Analysis of the \textit{DDF-VFA} Components and its Robustness}\label{sec:Method-Oriented Analysis}
We have presented a novel solution framework, \textit{DDF-VFA}, comprising several individual components. We have implemented this framework for two problem variants (Picking and Production), which differ in their objectives, constraints, and data characteristics.
In this section, we aim to provide a deeper analysis of the framework’s components and its adaptability across these diverse problem settings. Specifically, we analyze the contribution of problem-specific LNS operators in solving the $\mathcal{MP}$, and effective SPS algorithms in solving the $\mathcal{SP}$, to the overall performance. Our analysis highlights the importance of ensuring that all of the decomposition components work well together. Additionally, we investigate the framework's robustness by evaluating its response to changes in important, problem-specific characteristics in objective function parameters, problem constraints, and input data.

For this analysis, we select representative experiments that highlight the role of these components. This approach clarifies the interplay between methodological design and problem-specific challenges, offering insights into the scalability and applicability of \textit{DDF-VFA} to real-world logistics. The \textit{DDF-VFA} is retrained for each configuration tested in this analysis to ensure that the model fully adapts to the specific characteristics of the parameter set. We show that \textit{DDF-VFA} consistently outperforms all benchmark policies, highlighting its robustness and providing valuable insights. 

\subsubsection{{The Role of Neighborhood Operator Design in Master Policy Optimization.}}\label{sec: results for Picking Variant}
The combinatorial complexity of the master problem ($\mathcal{MP}$) policy search space necessitates neighborhood operators that directly align with the core dimensions of the underlying objective function. When a problem variant exhibits a heavy spatial-routing cost component, relying solely on temporal or deadline-oriented operators can severely restrict the local neighborhood search from discovering localized travel efficiencies. 
To isolate and evaluate this dependency within the \textit{DDF-VFA} framework, we conduct a structural operator-ablation study. We choose an evaluation environment characterized by a strong travel-time emphasis (instantiated here via the Picking Variant framework) and disable our custom travel-minimizing operators, leaving only standard deadline-oriented operators active during the LNS iterations. Our experiments demonstrate that restricting the master policy's search space topology in this manner results in an immediate 8.1\% increase in vehicle travel times. This significant degradation confirms that framework performance is highly sensitive to the presence of objective-targeted operators, which are required to successfully navigate local optima during master policy updates.

\subsubsection{{Sensitivity to Objective Function.}}
In our primary Picking Variant's experiments, we assumed a penalty factor of $\rho = 10$. To evaluate the structural adaptability of our framework under different operational priorities, we vary this penalty factor across $\rho \in \{0, 1, 10, 25, 50\}$ for both \textit{DDF-VFA} and the benchmark policies. Table~\ref{tab: methods performance results Picking Variant} summarizes the seed-averaged total vehicle travel time (Travel), customer delay (Delay), and the combined objective value (Total).

The results demonstrate a clear operational trade-off: scaling the penalty factor forces policies to aggressively suppress customer delay at the expense of increased routing distance. This behavioral adaptation is uniquely visible at the unpenalized boundary ($\rho = 0$), where the objective collapses exclusively to travel-time minimization. Under this incentive structure, \textit{DDF-VFA} achieves the absolute lowest travel time (2,010 minutes) by allowing customer delays to drift to 1,538 minutes. Notably, this 2,000-minute threshold represents an unavoidable structural routing floor required to fulfill the physical delivery layout, regardless of priority. In contrast, rule-based policies such as \textit{FIFO} lack this systemic optimization awareness; \textit{FIFO} blindly expends high routing overhead (2,279 travel minutes) to maintain a low delay profile (1,317 minutes) that is entirely unpenalized by the underlying objective. This confirms that the learned VFA policy effectively internalizes altered reward architectures rather than relying on invariant heuristics.

Conversely, when the penalty escalates to its highest level ($\rho=50$), \textit{DDF-VFA} successfully minimizes exposure to severe delay weights by shifting its policy to prioritize speed over routing economy. It curtails customer delay to 678 minutes while keeping travel time bounded at 2,149 minutes, yielding a total objective of 36,024. This performance heavily outpaces the \textit{FIFO} and \textit{Integrated} baselines, which yield total values of $68,110$ and $50,287$, respectively. Across all tested parameter settings, the cost reductions achieved by \textit{DDF-VFA} over the complex \textit{AI} benchmark range from 20.2\% to 47.1\%, highlighting the robust scalability of the policy-level decomposition approach under deeply altered operational environments.

\begin{table}[!t]
\caption{Results: Total vehicles travel time (in min.), total delay (in min.), and total objective value for the Picking Variant.}\label{tab: methods performance results Picking Variant}
\centering
\scriptsize
%\tiny
\begin{tabular}{|r|rrr|rrr|rrr|rrr|rrr|}
\hline
\multirow{2}{*}{$\rho$}  &
  \multicolumn{3}{c|}{\textit{DDF-VFA}} &
  \multicolumn{3}{c|}{\textit{FIFO}} &
  \multicolumn{3}{c|}{\textit{Integrated}} &
  \multicolumn{3}{c|}{\textit{AI}} &
  \multicolumn{3}{c|}{\textit{DDF-G}} \\
&  Travel & Delay & Total & Travel & Delay & Total & Travel & Delay & Total & Travel & Delay & Total & Travel & Delay & Total \\
\hline
0 &  %rho%%%%%%%%%%%

\textbf{2,010}  & 1,538   &  \textbf{2,010}& % \textit{DDF-VFA} travel, delay, total
2,279 & \textbf{1,317} & 2,279 & % FIFO Travel, Delay, Total
2,123 & 1,338 & 2,123 & % Integrated Travel, Delay, Total
2,100   &  {1,361} &  2,100 &  % AI Travel, Delay, Total
 2,016     & 1,646  & 2,016   \\ % DDF-G Travel, Delay, Total 
1 &  %rho%%%%%%%%%%%

\textbf{2,048} & \textbf{861}  & \textbf{2,909} & % \textit{DDF-VFA} travel, delay, total
 2,279 & 1,317 & 3,596 & % FIFO Travel, Delay, Total
2,104 &  1,118 & 3,222 & % Integrated Travel, Delay, Total
2,065 &  1,097 &  3,162&  % AI Travel, Delay, Total
2,097 &  1,046 &   3,143  \\ % DDF-G Travel, Delay, Total				
10 &  %rho%%%%%%%%%%%

 2,117&  \textbf{755} & \textbf{9,667} & % \textit{DDF-VFA} travel, delay, total
 2,279 & 1,317 & 15,445 & % FIFO Travel, Delay, Total
 2,115&   1,004& 12,154 & % Integrated Travel, Delay, Total
2,119 &  991 & 12,034 &  % AI Travel, Delay, Total
\textbf{2,111} &  908 &   11,194  \\ % DDF-G Travel, Delay, Total 				
 25 &  %rho%%%%%%%%%%%
		
2,148	   &  \textbf{684}  &  \textbf{19,250}& % \textit{DDF-VFA} travel, delay, total
2,279 & 1,317 & 35,195 & % FIFO Travel, Delay, Total 		
2,143 & 1002 & 27,196 & % Integrated Travel, Delay, Total
 2,189	  & 853 &  23,517  & % AI Travel, Delay, Total
 \textbf{2,120}  &  854 & 23,474  \\ % DDF-G Travel, Delay, Total 
50 &  %rho%%%%%%%%%%%

2,149	   &  \textbf{678}  &  \textbf{36,024}& % \textit{DDF-VFA} travel, delay, total
2,279 & 1,317 & 68,110 & % FIFO Travel, Delay, Total
2,147 & 963 & 50,287 & % Integrated Travel, Delay, Total
 2,203 	  & 858 &  45,122   & % AI Travel, Delay, Total
 \textbf{2,125}  &  865 & 45,392  \\ % DDF-G Travel, Delay, Total 
% 100 &  %rho%%%%%%%%%%%%%%%
% 60 &			
% \textbf{2,124} &  \textbf{677}  &  \textbf{69,804}& % \textit{DDF-VFA} travel, delay, total 
% 2,279 & 1,317 &  133,941
% & % FIFO Travel, Delay, Total
%  2,153& 1,005 & 102,661 & % Integrated Travel, Delay, Total
% 2,199  &  848 &  86,984 & % AI Travel, Delay, Total
% 2,140 & 862  &  88,385  \\% DDF-G Travel, Delay, Total
% 0 &  %rho%%%%%%%%%%%%%%%
% 180 & 
% \textbf{2,010}    &  0 &  \textbf{2,010} &% \textit{DDF-VFA} Travel, Delay, Total
%  2,279 &  0 &  2,279 &  % FIFO Travel, Delay, Total
% 2,126  & 0  & 2,126  &  % Integrated Travel, Delay, Total
% 2,101   & 0  &  2,101  &  % AI Travel, Delay, Total
%    2,017   & 0  & 2,017   \\ % DDF-G Travel, Delay, Total 
\hline
\end{tabular}
\end{table}

\subsubsection{{The Importance of the SPS Performance.}}\label{sec: results for Production Variant}
Since the subproblem ($\mathcal{SP}$) governs sequence-dependent setup constraints, the fidelity of the Subproblem Solver (SPS) directly influences the feasible boundaries of the master policy. If the SPS evaluates scheduling spaces myopically without accounting for sequence transitions, it will frequently flag structurally viable delivery combinations as `Infeasible`. This false-negative behavior artificially restricts the Master Problem's exploration loop. To evaluate this component-level dependency, we compare our proposed anticipatory look-ahead SPS against a simplified, myopic heuristic-based SPS that lacks forward-looking coordination.

While both configurations maintain comparable average delay profiles due to the binding nature of the downstream time windows, the structural operational metrics reveal a massive advantage for the anticipatory architecture. By proactively calculating sequence changeovers, the anticipatory SPS significantly improves vehicle consolidation efficiency, increasing the average number of orders packed per trip from $3.06$ to $3.19$. Simultaneously, it optimizes the preparation sequence to reduce the average changeover setup time from 2.75 down to 2.54 minutes per order. Crucially, the simplified myopic SPS lacks this sequencing agility, causing it to reject dense delivery clusters and force premature, sub-optimal vehicle dispatches. As a result, the anticipatory SPS reduces average violation from $11.22$ to $10.85$ minutes, and the violation rate from $34.39$\% to $33.97$\%. This ablation study confirms that high-fidelity subproblem solution is a prerequisite for achieving global master-policy routing efficiency.

\subsubsection{{Robustness to Data Changes.}}
Setup times play a critical role in determining the complexity of the Production Variant, as they introduce significant constraints on the scheduling process. Unlike the other variants, the variability and heterogeneity in setup times create additional challenges, particularly in optimizing the order preparation sequences. 
We analyze the performance of all methods under different levels of setup time heterogeneity. In our main experiments, the setup times were 0, 5, or 10 minutes. In the first experiment, we increase the setup times of 0 minutes to 3 minutes and decrease the setup times of 10 minutes to 7 minutes. All other setup times, initially set to 5 minutes, remain unchanged. In a second experiment, all setup times are 5 minutes. Thus, all experiments share the same average setup time but differ in the setup time variability.

With a minimal setup time of zero minutes, \textit{DDF-VFA} and \textit{DDF-G} optimize the order preparation sequences to fully exploit the minimal setup times, resulting in low average violations of 10.85 and 11.27 minutes, respectively. When the minimum setup time is increased to 3 minutes, the methods faced significantly higher average violations of 16.36 and 16.44 minutes, respectively. Yet, even when the minimal setup time increase, \textit{DDF-VFA} and \textit{DDF-G} maintain significantly better performance compared to other approaches, indicating their adaptability and effectiveness in handling changes in problem parameters.
In particular, \textit{AI} struggles significantly under these conditions, with its average violations increasing from 31.03 to 77.97 minutes. The reason may be that with less distinct differences in the setup times, the search for effective decisions becomes more nuanced and therefore challenging without decomposition. A similar trend is observed for the case when all setup times are 5 minutes, i.e., all methods experience even higher violations, but in the \textit{DDF-VFA} and \textit{DDF-G}, the increase (to 22.4 and 23.4, respectively) is moderate and maintains better performance compared to the other approaches.
This analysis reveals the critical role of the distribution of setup times, and in particular, the minimal setup times. When the minimal setup times are zero, the policies can sometimes bypass the setups and avoid unnecessary delays. Conversely, even a small increase in setup times (to 3 or 5 minutes) results in more constrained operations and higher costs, underscoring the importance of setup-aware optimization.

\section{Conclusion} \label{sec: conclusion}

In this paper, we introduced the DOFP, a new class of problems consisting of an integration of two interdependent stages in dynamic and stochastic environments. This class encompasses a wide range of fulfillment applications in logistics, healthcare, and food services, where coordinating preparation and delivery stages is critical. Our framework also supports multi-step preparation phases, enhancing its versatility for real-world challenges such as picking followed by packing or machine configuration.
Several promising directions emerge from this work. Instantiating and benchmarking additional variants is a natural next step, including, for example, batched preparation, heterogeneous resources and vehicles, multi-stage preparation, emissions restrictions, and perishability constraints. While our structural policy decomposition conceptually opens the door to these more complex environments, fully implementing the framework in those settings may be non-trivial. 
%For instance, in an on-demand meal environment, the upstream preparation subproblem transforms from a feasibility-check or sequencing routine into a highly complex scheduling puzzle governed by strict perishability limits, rapid order cycles, and tightly synchronized resource constraints. Solving this specialized subproblem well within our iterative master-subproblem loop requires significant computational time, which compromises real-time responsiveness. Developing high-speed, specialized subproblem heuristics capable of sustaining the look-ahead policy under such intense time-dependent constraints represents a compelling and rigorous direction for future research.
A separate, structurally distinct direction is to relax the deterministic-MILP assumption and accommodate \emph{stochastic processing times}, where the per-order processing time \(t^p_i\) is itself random: with \(t^p_i\) stochastic, the resource-availability constraint~\eqref{con:4} and the synchronization constraint~\eqref{con:17} become probabilistic statements, calling for a chance-constrained or sample-average-approximation reformulation that we leave to future work. On the application side, testing additional operational variants, such as batching or multi-stage preparation, could expand the framework’s utility. Future studies could incorporate new constraints such as priority-based processing, vehicle access limits, emissions restrictions, or strict delivery windows, enhancing real-world relevance. Extending the framework to cases where costs are concentrated on the first stage, expanding it to three-stage problems {or incorporating stochastic processing times} presents compelling opportunities. 

% % Acknowledgments here
% \ACKNOWLEDGMENT{
% Gal Neria's research is partially supported by the Israeli Smart Transportation Research Center (ISTRC) and the Council for Higher Education in Israel (VATAT). Michal Tzur’s research is partially supported by the Israeli Smart Transportation Research Center (ISTRC). Marlin Ulmer's work is funded by the DFG Emmy Noether Programme, project 444657906. We gratefully acknowledge their support. 

% \textit{The authors report there are no competing interests to declare.}
% % Enter the text of acknowledgments here
% }% Leave this (end of acknowledgment)

% \section*{End Notes}
% \begin{enumerate}
%     \item See, e.g., the companies \href{https://www.materialsondemand.com}{"Materials on Demand"} and \href{https://www.dispatchit.com/industries/building-products}{"Dispatch"} that operate in the USA.
% \end{enumerate}
% Appendix here
% Options are (1) APPENDIX (with or without general title) or 
%             (2) APPENDICES (if it has more than one unrelated sections)
% Outcomment the appropriate case if necessary
%
\bibliographystyle{elsarticle-harv}
\bibliography{references}
\newpage
\appendix

%\section*{Appendix}

\section{Summary of Notation}\label{app:notation}
Tables \ref{tab:Summary of Notation Part1} and \ref{tab:Summary of Notation Part2} summarize the main notation used in the paper.

% --- TABLE 1: PARAMETERS, INDICES, AND MDP STATE SPACE ---
\begin{table}[h!]
\centering
\scriptsize
\caption{Summary of Notation: Parameters, Indices, and MDP Framework}
\label{tab:Summary of Notation Part1}
\begin{tabular}{|l|l|l|}
\hline
\textbf{Category} & \textbf{Symbol} & \textbf{Description} \\
\hline
General Parameters and Sets &
\( T^c \) & Time horizon for order arrivals \\
& \( T \) & Total time horizon for processing and delivery \\
& \( I \) & Set of all orders \\
& \( C \) & Set of preparation resources (e.g., pickers, machines) \\
& \( V \) & Set of delivery vehicles \\
& \( l_i \) & Customer location of order \( i \) \\
& \( t^o_i \) & Arrival time of order \( i \) to the system \\
& \( t^t_{ij} \) & Travel time between locations \( i \) and \( j \) \\
& \( \kappa \) & Capacity of a delivery vehicle \\
& \( t^p_i \) & Processing time for order \( i \) \\
& \( 0 \) & Location of the central facility (depot) \\
& \( I_k \) & Set of open orders at decision point \( k \) \\
& \( i_k \) & Newly arrived order at decision point \( k \) \\
& \( I_k^{(1)} \) & Information on open orders in preparation stage at decision point \( k \) \\
& \( I_k^{(2)} \) & Information on open orders in delivery stage at decision point \( k \) \\
\hline
Indices &
\( k \) & Decision point index \\
& \( i, j \) & Orders/customers \\
& \( c \) & Preparation resources \\
& \( v \) & Vehicles \\
& \( t \) & Time index \\
& \( \theta \) & Trip index \\
\hline
MDP Notation &
\( S_k \) & State at decision point \( k \), consisting of \( S_k^{(1)} \) (preparation) and \( S_k^{(2)} \) (delivery) \\  
& \( S_k^{(1)} \) & Preparation state at decision point $k$\\
& \( S_k^{(2)} \) & Delivery state at decision point $k$ \\
& \( \mathcal{S} \) & State space \\
& \( S_k^x \) & Post-decision state at decision point \( k \) \\
& \( \mathcal{S}^x \) & Post-decision state space \\
& \( K \) & Total number of decision points (a random variable) \\
& \( \Psi_k \) & Preparation plan at decision point $k$\\
& \( t^s_k \) & Start times for preparation tasks at decision point $k$\\
& \( t^r_{k} \) & Return time of all vehicles from their last departed trips at decision point $k$\\
& \( \Theta_k \) & Delivery plan at decision point $k$\\
& \( \Theta_{kv} \) & Delivery plan for vehicle \( v \) at decision point \( k \) \\
& \( \theta_{kv1}, \theta_{kv2}, \dots \) & Sequence of trips planned for vehicle \( v \) \\
& \( t_k^d \) & Vector of planned departure times at decision point \( k \) \\
& \( x_k \) & Decision at \( k \), comprising \( x_k^{(1)} \) (preparation decision) and \( x_k^{(2)} \) (delivery decision) \\
& \( x_k^{(1)} \) & Preparation decision: \( (\Psi_k^x, t_k^{s,x}) \) \\
& \( x_k^{(2)} \) & Delivery decision: \( (\Theta_k^x, t_k^{d,x}) \) \\
& \( \mathcal{X}(S_k) \) & Action space / set of feasible decisions at state \( S_k \) \\
& \( \mathcal{X}^{(1)}(S_k^{(1)}) \) & Set of feasible preparation decisions at state \( S_k^{(1)} \) \\
& \( \mathcal{X}^{(2)}(S_k^{(2)}) \) & Set of feasible delivery decisions at state \( S_k^{(2)} \) \\
& \( W_{k+1} \) & Stochastic information arriving between decision points \( k \) and \( k+1 \) \\
& \( \Omega \) & Sample space of stochastic information \\
& \( S^M \) & System transition function \\
& \( \pi \) & Combined policy across both stages \\
& \( \pi^{(1)} \) & Policy for preparation decisions in the first stage \\
& \( \pi^{(2)} \) & Policy for delivery decisions in the second stage \\
& \( \Pi \) & Set of all feasible combined policies \\
& \( \Pi^{(1)} \) & Set of feasible preparation policies \\
& \( \Pi^{(2)} \) & Set of feasible delivery policies \\
\hline
\end{tabular}
\end{table}

\clearpage % Forces a page break if needed, or omit if you want them to flow naturally

% --- TABLE 2: DECISIONS, CONSTRAINTS, AND OBJECTIVES ---
\begin{table}[h!]
\centering
\scriptsize
\caption{Summary of Notation: Decisions, Constraints, and Objectives}
\label{tab:Summary of Notation Part2}
\begin{tabular}{|l|l|l|}
\hline
\textbf{Category} & \textbf{Symbol} & \textbf{Description} \\
\hline
Preparation Decision Variables &
\( z_{ict} \) & Indicating if resource \( c \) starts processing order \( i \) at time \( t \) \\
\hline
Delivery Decision Variables &
\( y_{ivt} \) & Indicating if vehicle \( v \) departs with order \( i \) at time \( t \) \\
& \( q_{ijt} \) & Indicating if orders \( i, j \) are delivered sequentially in the same trip that departs at time \( t \)\\
& \( t^a_i \) & Arrival time of order \( i \) at the customer location \\
\hline
Synchronization Constraints &
\( t^m_{ki} \) & End time of preparation for order $i$ that started preparation before \( t_k \)\\
& \( t^d_\theta \) & Departure time of trip \( \theta \) \\
& \( \mathcal{Q}_{prep}(S_k^{(1)}) \) & Set of preparation constraints \\
& \( \mathcal{Q}_{deliv}(S_k^{(2)}) \) & Set of delivery constraints \\
& \( \mathcal{Q}_{sync}(S_k) \) & Set of synchronization constraints \\
\hline
Problem-Specific Parameters
& \( \rho \) & Delay penalty cost per time unit (Picking Variant) \\
& \( \tau \) & Click-to-door promised delivery time (Picking Variant) \\
&\( s_{ij} \) & Setup time between orders \( i \) and \( j \) (Production Variant) \\
& \( t^{sw}_i, t^{ew}_i \) & Soft delivery time window for order \( i \) (Production Variant) \\
& \( d_i \) & Demand size of order \( i \) (Production Variant) \\
\hline
Costs and Objective &
\( \mathbb{C}(S_k^{(2)}, x_k^{(2)}) \) & Immediate cost incurred by second-stage decision \( x_k^{(2)} \) at state \( S_k=(S_k^{(1)},(S_k^{(2)}) \) \\
& \( J(\Theta, t^d) \) & Total cost of a delivery plan $(\Theta, t^d)$ \\
& \( V(S_k) \) & Value function of state \( S_k \) \\
& \( V(S_k^x) \) & Value of the post-decision state of state \( S_k \)\\
\hline
\end{tabular}
\end{table}

\section{Pseudocode for DDF-VFA}\label{app: Pseudocode for DDF-VFA} The general process of \textit{DDF-VFA} is outlined in Algorithm \ref{alg:ddf-vfa}. The framework begins by initializing decisions using a First-In-First-Out (FIFO) strategy, which assigns incoming orders to available resources and vehicle trips as they arrive. The best found solution, denoted $x^*_k$, is initially set to this solution.  
Then, the \textit{DDF-VFA} iteratively explores solutions to the $\mathcal{MP}$ and the $\mathcal{SP}$, as detailed next.

\paragraph{Exploration of solutions to the $\mathcal{MP}$}
An LNS process (described in Section \ref{sec: LNS}) explores potential partial delivery decisions iteratively by modifying the following elements:
(1) Bundles of orders that will be grouped into the same trip;
(2) The sequence in which orders are delivered within each trip; and
(3) The sequence of trip departure times.
For each generated partial delivery decision, $\boldsymbol{\theta}$, the TAS (described 
 in Section \ref{sec: the TAS}) checks feasibility. If TAS declared $\boldsymbol{\theta}$ is infeasible, then the LNS generates another partial delivery decision. Otherwise, TAS assigns specific departure times to each trip, thereby transforming $\boldsymbol{\theta}$ into a complete second-stage decision, $x_k^{(2)}$, which includes all assigned departure times. This step ensures that all trips have departure schedules. 

\paragraph{Obtaining a solution to the $\mathcal{SP}$} Once departure times are assigned, the SPS finds a complement resource assignments, $x_k^{(1)}$, aiming to optimize the cost to go, given $x_k^{(2)}$. That is, together, $x_k^{(1)}$ and $x_k^{(2)}$ form a feasible decision $x_k$, enabling the VFA component to predict the future cost of the associated post decision state, $\hat{V}(S_k^x)$.
The selected $x_k$ is then compared with the best decision found so far, $x^*_k$. If the new decision offers a lower sum of immediate cost and predicted future cost, it replaces the current best solution, i.e., $x^*_k \leftarrow x_k$. The \textit{DDF-VFA} then continues to explore new solutions to the $\mathcal{MP}$.

This iterative process continues until the stopping condition is met, at which point the framework returns the best decision found.

% \SetCommentSty{textnormal} % Ensures comments use normal font style
% %\LinesNumbered % Optional: Enables line numbering for better readability
% \SetAlgoNlRelativeSize{-1} % Adjusts line number font size
% \SetAlgoNlRelativeSize{0}  % Resets default if needed

%\SetAlgoNlRelativeSize{0}
\begin{algorithm}[h]
\caption{\textit{DDF-VFA} Framework}
\label{alg:ddf-vfa}
\begin{algorithmic}[1]
\Require State $S_k$
\Ensure Decision $x_k$

\State Initialize decisions via FIFO assignment
\State Set best decision $x_k^* \gets$ current FIFO solution
\While{stopping criterion not met}
    \State Generate partial delivery decision $\boldsymbol{\theta}$ via LNS  \Comment{Modify trips and order sequence}
    \State Complete $\boldsymbol{\theta}$ to $x_k^{(2)}$ via TAS
    \If{$x_k^{(2)}$ is feasible}
        \State Obtain $x_k^{(1)}$ via SPS given $(S_k, x_k^{(2)})$
        \State $x_k \gets (x_k^{(1)}, x_k^{(2)})$
        \State Predict future cost $\hat{V}(S_k^x)$ via VFA
        \If{$\mathbb{C}(S_k, x_k) + \hat{V}(S_k^x)$ improves best}
            \State $x_k^* \gets x_k$
        \EndIf
    \Else
        \State Continue to next partial decision
    \EndIf
\EndWhile
\State \Return $x_k^*$ \Comment{Best decision found}
\end{algorithmic}
\end{algorithm}

\section{LNS Operators Adjustments}\label{app: LNS Adjustments}
The basic LNS operators used in this paper were borrowed from \citet{NeriaGhost2024}, who addressed a Meal Delivery Variant.
In this appendix, we describe a broader set of operators and adjustments designed for the LNS, {organized by the feature they target. Each variant instantiates the union of operators relevant to its active features: the Picking Variant uses the operators of~\ref{app: LNS Adjustments basic}, and the Production Variant uses those of~\ref{app: LNS Adjustments TW}. 

We note that for all other examples of optional features from \ref{app: MILP extensions} there is no need for new operators. 
However, the feasibility checks might change for: Heterogeneous vehicles, Strict (hard) delivery windows, Vehicle access limits, and Emissions.
None of these adjustments introduce a new operator; each adds a one-shot feasibility test inside the operators. The other optional features that were mentioned do not affect the delivery-side decision space (priority weights, batched preparation, perishability, and multi-stage preparation); and hence, they require no LNS operator changes, as they enter through the cost evaluator and/or the SPS only.}

\subsection{Operators for Travel Time Reduction}\label{app: LNS Adjustments basic}
The Picking Variant aims to minimize both the total travel times and the total delay. While the latter is addressed by the operators designed by \citet{NeriaGhost2024}, we implemented here three new operators that focus on reducing travel times:
\begin{itemize}
    \item \textbf{Swap Operator:} This operator swaps two random orders between two different trips. The objective is to exploit opportunities for better alignment between order locations and their assigned trips, potentially reducing overall travel distance.
    \item \textbf{Insertion Operator:} This operator removes a random order from one trip and reassigns it to another random trip using an insertion heuristic. The heuristic evaluates all possible positions for inserting the order into the target trip and selects the one that minimizes the incremental travel time of the trip. It aims to instantly obtain a better placement to reduce the overall travel time across trips.
    \item \textbf{Reordering Operator:} This operator reorders the sequence of locations within a (random) trip to minimize the total travel time. By solving a traveling salesman problem (TSP) for the trip's locations, it determines the visit sequence that results in the shortest possible route for the trip.
\end{itemize}

\subsection{Operators for Time Windows}\label{app: LNS Adjustments TW}
The Production Variant aims to meet customer time windows, which motivated the addition of the following new operators across all methods:
\begin{enumerate}
\item \textbf{Reordering Three Consecutive Trips by Average Earliest Time Windows}:
Three consecutive trips, $\theta_{(l)}, \theta_{(l+1)}, \theta_{(l+2)} \in \boldsymbol{\theta}$, are randomly selected based on a uniform distribution. These trips are then reordered within the sequence $\boldsymbol{\theta}$ based on the average earliest starting or ending time windows of the orders within these trips (with an equal probability of sorting by starting or ending).
The motivation is reducing time window violations.

   \item \textbf{Sorting Customers within a Trip by Earliest Time Windows}:
  For a randomly selected trip, a random subset of customers are reordered based on their earliest starting or ending time windows (with an equal probability of sorting by starting or ending).
 Here too, the motivation is reducing time window violations.

\item \textbf{Proximity-Based Customer Sorting with Time Windows}:
    For a random trip $\theta \in \boldsymbol{\theta}$, three consecutive customers  $j, j+1, j+2$ selected in random are sorted based on their time window constraints and their proximity to other customers within the same trip. Specifically, for each permutation of the three customers, a  Slack Time is calculated by
    \begin{equation} \text{Slack Time} = \min(t_{[2]}^{ew} - t_{[1]}^{sw} - l_{{[1]}{[2]}}, t_{{[3]}}^{ew} - t_{[2]}^{sw} - l_{{[2]},{[3]}}) \end{equation}
    where $[1], [2], [3]$ refer to the first, second and third customers in the permutation, respectively.
    Then, the selected permutation of these customers is the one maximizing the Slack Time.   
A higher slack time suggests that there is a sufficient buffer for inserting a new order to the trip, reducing the risk of creating future schedule delays or earliness. Conversely, a lower slack time indicates a tighter schedule, which may cause higher delays or earliness in the future.

 \item \textbf{Time Window and Distance Aware Insertion}:
    This operator starts by selecting a random order $i$, which is removed from its current position in its trip and is then inserted into a new position in another random trip $\theta_l \in \boldsymbol{\theta}$ based on the geographical proximity of the sites in $\theta_l$ and the alignment of the trip’s time windows. That is, for each $j \in \theta_l$ we compute a score of inserting order $i$ after it, using the slack time, i.e.,    \begin{equation} \text{Slack Time} = \min\left(t_i^{ew} - t_j^{sw} - l_{ji},  t_{j+1}^{ew} - t_i^{sw} - l_{i,j+1} \right) \end{equation} and we select the location in which the slack time is maximized.    
\end{enumerate}

Two of our benchmark methods, \textit{AI} and \textit{Integrated}, utilize a designated algorithm to assign orders to resources along the LNS process. This algorithm, named PDFT (\textit{Partial Decision Feasibility and Timing}, see \citealt{NeriaGhost2024}), operates based on the sequence of order preparation in a candidate solution. For these benchmark methods, we adapt the PDFT algorithm to account for setup times. Specifically, in the steps where orders are assigned to the first available resource upon completing its last task, the algorithm is modified to assign orders to the first resource available after considering the required setup time.

{
\section{Feature-Specific TAS Extensions}\label{app: TAS feature extensions}
The TAS of Section~\ref{sec: the TAS} accommodates several optional features as small additions to its initial check, departure-assignment, or vehicle-assignment step. Each extension is listed below; the only substantive one, the soft-delivery-window DP, is detailed in~\ref{app: DP for TAS departure times optimization}.
\begin{itemize}
    \item \textbf{Strict (hard) delivery windows:} the postponement step terminates as infeasible if the resulting arrival time would exceed the window upper bound $b_i$ for any order, instead of refining the departure.
    \item \textbf{Heterogeneous vehicles:} the vehicle-assignment step checks the trip's total demand against the vehicle's (specific) capacity ($\kappa_v$).
    \item \textbf{Vehicle access limits:} the TAS draws candidate vehicles for each trip only from the permitted set $\mathcal{A}$.
    \item \textbf{Emissions:} when emissions enter as a hard budget, the TAS rejects a trip-vehicle assignment whose resulting emissions exceed $E_v$; when they enter as an additive objective term, no TAS change is needed.
    \item \textbf{Perishability}: the synchronization-feasibility check (Step~2 of Section~\ref{sec: the TAS}) invokes the perishability-aware SPS variant of~\ref{app: SPS perishability}, which verifies that the chosen departure time admits a preparation completion within $[\hat{t}^a_i - F_i,\, \hat{t}^a_i]$ for every order in the trip.
\end{itemize}
}

\subsection{The TAS Algorithm for Soft Time-Windows}\label{app: DP for TAS departure times optimization}
It is sometimes beneficial to wait between customers' visits to avoid earliness, in contrast to the other two problems where waiting between two customers is not desired.
For this reason, we apply the TAS algorithm in two steps. First, we apply the procedure described in Section \ref{sec: the TAS} to obtain the earliest possible departure times and assignment of trips to vehicles, ensuring that a compatible decision exists for the first stage. 
Then, we adapt the departure times of the trips and set waiting times within the trips. This is done by considering the trips assigned to each vehicle independently.
Thus, the problem that arises obtains as input a partial delivery decision, an assignment of trips to vehicles, their sequence, and the earliest possible departure times. The problem is to schedule the departures from the facility and from each of the visited customers, along the trips, for each vehicle independently. Then, we formulate it as a dynamic program to obtain an optimal solution, as described next.

\textbf{Parameters:} 
Let
${\Theta}_v$ denote the sequence of trips $\theta$ assigned to a certain vehicle $v\in V$ and assign sequential indices to the customers along all the trips in ${\Theta}_v$ according to the trips' sequence, regardless of the trip they belong to. In other words, the index $i$ reflects the overall order in which customers are visited across all trips assigned to the vehicle, e.g., $i=1$ refers to the first customer of the first trip, and so on. 
%$i^{st}_{j}$ and $i^l_{j}$ denote the first ("\textit{st}" is a symbol) and last ("\textit{l}" is a symbol) customer in trip $\theta_j$, respectively;
% $lb_{\theta_j}$ denotes a lower bound on the departure time of trip $\theta_j$. 
Finally, $lb_i$ denotes the earliest possible arrival time at customer $i$, considering the trip departures' lower bounds.

The objective is to minimize the total penalty for earliness and tardiness. The key idea is to determine the optimal arrival time at each customer along its trip while respecting the sequencing constraints within and between the trips.
A DP formulation is given next.

\begin{itemize}
\item\textbf{State Definition.}
The state is defined as
$(i, t)$
where
\( i \) is the current customer
and \( t \in \mathbb{Z}_{\geq {{lb}_i}} \) is the arrival time to this customer (an integer, as travel times and time windows are integers).
\item\textbf{Decision.}
At each state \((i, t) \), the decision is to choose the optimal delivery time \( t_i \in \mathbb{Z}_{\geq t} \) for the current customer from the feasible range \( [t, t_i^{\text{max}}] \), where
$
t_i^{\text{max}} = \max(t_i^{sw}, t)
$. Note that the delivery time $t_i$ may be later than the arrival time, $t$. We also observe that if $t \leq t_i^{sw}$ it is not optimal to set the delivery time to be larger than $t_i^{sw}$ and if $t > t_i^{sw}$ 
it is only optimal to set $t_i=t$ because there is no earliness at customer $i$.
\item\textbf{Cost Calculation.}
At each customer \( i \), the immediate cost (of the DP) \( \mathcal{C}(i, t_i) \) is calculated based on the soft time windows as follows:
$
\mathcal{C}(i, t_i) = \max(0, t_i^{sw} - t_i, t_i - t_i^{ew}).
$
This accounts for the earliness or tardiness penalties associated with the delivery time at customer \( i \).
\item\textbf{{Bellman Equation.}}
The Bellman equation finds the optimal cost $dp(i, t) $ of a state $(i, t)$, i.e., when starting from customer \( i \geq 1\) at time $t \in [0,T]$ and moving forward through the remaining customers:
\[
dp(i, t) = \begin{cases}
\min_{t_i \in [t, t_i^{\text{max}}]} \left( \mathcal{C}(i, t_i) + dp(i+1, t_{i} + tt_{i,i+1}) \right), & i \text{ is not the last site of a trip} \\  
\min_{t_i \in [t, t_i^{\text{max}}]} \left( \mathcal{C}(i, t_i) + dp(i+1,\max(lb_{i+1}, t_{i} +  tt_{i,0}+tt_{0,i+1})) \right),& \text{ otherwise}
\end{cases}
\]
For convenience, we denote by $\mathcal{H}(i, t_i)$ the cost-to-go of decision $t_i$ at state $(i,t)$, i.e.,
\[
\mathcal{H}(i, t_i) = \begin{cases}
 dp(i+1, t_{i} + tt_{i,i+1}), & i \text{ is not the last site of a trip} \\  
  dp(i+1,\max(lb_{i+1}, t_{i} +  tt_{i,0}+tt_{0,i+1})),& \text{ otherwise}
\end{cases}
\]

\item\textbf{Ending Condition.}
\( dp(i,t) = \mathcal{C}(i, t_i^{\text{max}}) \text{ for }i \text{ which is the last customer of the last trip.}\) 
\item\textbf{Starting Condition.}
The objective is given by $dp(1,{lb}_1)$.
\end{itemize}

In the next claim, we introduce a condition that enables us to save computations when solving the DP. To this end, let ${TC}(i, t, t_i)$ denote the sum of the immediate cost and the cost-to-go for selecting the delivery time  $t_i$ at state $(i, t)$. 
\begin{claim}[DP Computation Enhancement]
\label{th:DP Decision Space Reduction} 

\[
\text{If } TC(i, t, t_i + h_1) \geq TC(i, t, t_i), \text{ then, } TC(i, t, t_i + h_1 + h_2) \geq TC(i, t, t_i + h_1 ), \quad \forall i, t, t_i \geq t, h_1 \geq 1,h_2 \geq 1.
\]
% That is, if postponing the delivery time \( t_i \) by 1 minute does not reduce the total cost, no further postponement is required. 
\end{claim}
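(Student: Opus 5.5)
The plan is to show that, for fixed $(i,t)$, the map $t_i \mapsto TC(i,t,t_i) = \mathcal{C}(i,t_i) + \mathcal{H}(i,t_i)$ is \emph{discretely convex} on the integers $t_i \geq t$, i.e.\ its forward differences are nondecreasing. The claim follows at once from this. Discrete convexity means the average slope over $[t_i+h_1,\,t_i+h_1+h_2]$ is at least the average slope over $[t_i,\,t_i+h_1]$. By hypothesis the latter is $\geq 0$, so $TC(i,t,t_i+h_1+h_2)-TC(i,t,t_i+h_1)\geq 0$. Note that $TC(i,t,\cdot)$ does not depend on $t$ except through its domain $t_i\geq t$. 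We therefore work with $g_i(s)=\mathcal{C}(i,s)+\mathcal{H}(i,s)$ defined for all integers $s$, extending beyond $t_i^{\max}$ where needed.

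The convexity will be proved by backward induction on $i$, with the joint hypothesis that $dp(i,\cdot)$ is convex and nondecreasing in its time argument. The argument has four parts.

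\begin{enumerate}
\item \emph{Convexity of $\mathcal{C}(i,\cdot)$.} The function $\mathcal{C}(i,s)=\max(0,\,t_i^{sw}-s,\,s-t_i^{ew})$ is a maximum of affine functions, hence convex in $s$.

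\item \emph{Base case.} For the last customer, $dp(i,t)=\mathcal{C}(i,\max(t_i^{sw},t))$. This is constant for $t\leq t_i^{sw}$ and equal to the convex nondecreasing tail $\max(0,t-t_i^{ew})$ afterwards. It is therefore convex and nondecreasing.

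\item \emph{Convexity of $\mathcal{H}(i,\cdot)$.} Assume $dp(i+1,\cdot)$ is convex and nondecreasing. The inner map is either $s\mapsto s+tt_{i,i+1}$ (affine, increasing) or $s\mapsto\max(lb_{i+1},\,s+tt_{i,0}+tt_{0,i+1})$ (convex, nondecreasing). Composing a convex nondecreasing function with a convex nondecreasing integer-valued map preserves discrete convexity and monotonicity. Hence $\mathcal{H}(i,\cdot)$ is convex nondecreasing, and $g_i=\mathcal{C}(i,\cdot)+\mathcal{H}(i,\cdot)$ is convex.

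\item \emph{Closing the induction.} We have $dp(i,t)=\min_{s\in[t,\,t_i^{\max}]}g_i(s)$. First, the restriction $s\leq t_i^{\max}$ can be dropped without changing the value. For $s>t_i^{sw}$ the cost $\mathcal{C}$ is nondecreasing, and $\mathcal{H}$ is nondecreasing by the induction hypothesis; this is exactly the paper's observation that waiting past $t_i^{sw}$ never helps. So $dp(i,t)=\min_{s\geq t}g_i(s)$. For a convex $g_i$ with a minimizer $m$, this equals $g_i(m)$ for $t\leq m$ and $g_i(t)$ for $t\geq m$. That function is convex and nondecreasing, which completes the induction step.
\end{enumerate}

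The step I expect to be the main obstacle is the discrete composition in part 3 together with the domain bookkeeping in part 4. In the discrete setting one must check carefully that $f\circ\phi$ has nondecreasing forward differences when $\phi$ is integer-valued, convex and nondecreasing. I would verify this directly: the increments of $\phi$ are nondecreasing nonnegative integers, and the increments of $f$ over longer or later intervals are at least as large by convexity and monotonicity of $f$.

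The second point to handle carefully is the domain of $TC$. The claim quantifies over $t_i+h_1+h_2$, which may exceed $t_i^{\max}$. I would interpret $TC$ there via the extension $g_i$. Alternatively, I would note that for $t\geq t_i^{sw}$ the domain is the single point $t_i=t$, so that case is vacuous. For $t<t_i^{sw}$, $g_i$ is convex on all of $[t,\infty)$, and the slope argument from the first paragraph applies verbatim.
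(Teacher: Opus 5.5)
Your proof is correct, but it takes a different route from the paper. The paper argues directly with slopes on $[t,t_i^{\max}]$. There $\mathcal{C}(i,\cdot)$ decreases with slope exactly $-1$ (pure earliness) and $\mathcal{H}(i,\cdot)$ is nondecreasing, so the hypothesis forces $\mathcal{H}(i,t_i+h_1)-\mathcal{H}(i,t_i)\geq h_1$. The paper then argues verbally that some downstream customer is already late, and that a further postponement by $h_2$ raises the delay of every late customer by $h_2$ while $\mathcal{C}$ drops by only $h_2$; it restricts attention to $t_i+h_1+h_2\leq t_i^{\max}$. You instead isolate the structural property behind this, namely discrete convexity and monotonicity of $dp(i,\cdot)$. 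You prove it by backward induction, using closure under composition with the shifted or max-clipped transition maps and under the suffix minimum $t\mapsto\min_{s\geq t}g_i(s)$. Your route buys rigor. The paper's step from ``some downstream customer is late'' to ``$\mathcal{H}$ grows by at least $h_2$'' ignores that $\mathcal{H}$ is an optimized cost-to-go. If the optimal continuation waits at an early intermediate customer before reaching a late one, a unit shift can be absorbed by that waiting (equivalently, the late customer's extra delay is offset by reduced earliness at the intermediate customer). In that case $\mathcal{H}$ is flat even though someone is late. What the argument really needs is that the forward differences of $\mathcal{H}(i,\cdot)$ never decrease, which is exactly your lemma. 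Your route also buys generality, since you never use the specific slope $-1$: the same argument covers weighted earliness/lateness penalties and the extension beyond $t_i^{\max}$. The paper's version is shorter and operationally intuitive but leaves its key step informal. One detail to add in your part 4 is the existence of the minimizer $m$. It exists because $\mathcal{C}(i,s)\to\infty$ as $s\to-\infty$ while $\mathcal{H}\geq 0$ (or simply restrict to $s\geq lb_i$), and $g_i$ is nondecreasing for $s\geq t_i^{sw}$.
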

\begin{proof}
If
\(
{TC}(i, t, t_i + h_1) \geq {TC}(i, t, t_i),
\)
then:
\[
{TC}(i, t, t_i + h_1) - {TC}(i, t, t_i) = {C}(i, t_i + h_1) - {C}(i, t_i) + \mathcal{H}(i, t_i + h_1) - \mathcal{H}(i, t_i) \geq 0.
\]
Note that ${C}(i, t_i + h_1) - {C}(i, t_i) = -h_1, \forall t_i \in [t, t_i^{max}-h_1]$, since any postponement in $t_i$ reduces the earliness at customer $i$.
On the other hand, the cost-to-go $\mathcal{H}(i, t_i)$ is a non-decreasing function of $t_i$. Hence, ${TC}(i, t, t_i + h_1) \geq {TC}(i, t, t_i)$ implies that $\mathcal{H}(i, t_i + h_1) - \mathcal{H}(i, t_i) \geq h_1$, which means that at least one future customer is already late. Further postponing $t_i$ by some $h_2 \geq 1$ will increase the delay for the customers that already experience delays, by $h_2$ for each, and possibly incur delays for other customers as well, while only reducing the immediate cost for $i$ by $h_2$ (for $t_i+h_1+h_2 \leq t_i^{max}$).
\end{proof}
Thus, at a given DP state, if ${TC}(i, t, t_i + h_1) \geq {TC}(i, t, t_i) (t_i \in [t, t_i^{max}-h_1])$, the optimal decision is not greater than $t_i + h_1$ ($h_1 \geq 1$).

{
\section{Feature-Specific SPS Extensions}\label{app: SPS feature extensions}
The SPS of Section~\ref{sec: the SPS} accommodates several optional features. One extension is parameter-style, updating the existing scheduling rule without changing its structure; the rest require a full algorithmic substitution of the SPS. In particular, three of the features listed in \ref{app: MILP extensions} require SPS substitutions which are detailed below; in each case, only the SPS itself changes, and Algorithm~\ref{alg:ddf-vfa} is unaffected. We note that these substitutions may be time consuming and their computational time to performance tradeoff is left for future work.

\subsection{Batched-Preparation SPS}\label{app: SPS batching}
When a single resource can process up to $B$ orders simultaneously, at each step, instead of assigning the next EDD order to a single resource, we form a batch by greedily appending up to $B-1$ additional unassigned orders whose processing fits within the batch finish time $\max_{j \in B_t} t^p_j$ and whose due dates are no later than that finish time. The batch is then assigned as a whole to the resource that minimizes setup time from the resource's last assigned order (or batch), and the resource's availability is advanced by the batch finish time. The procedure repeats until all orders are assigned or no resource can accommodate the next batch by its earliest due date, in which case `infeasible' is returned.

\subsection{Multi-Stage SPS}\label{app: SPS multi-stage}
With $M$ preparation stages, the SPS is reformulated as a flow-shop scheduler. Orders are processed in EDD order; for each order $i$, the algorithm sequentially assigns it to a resource at each stage $m = 1, \dots, M$, subject to the stage-precedence constraint that the start time at stage $m$ for order $i$ is no earlier than its completion at stage $m-1$. The lookahead check of Section~\ref{sec: the SPS} is generalized to verify that at least the next two orders can be feasibly assigned at the \emph{bottleneck stage} (the stage with the largest aggregate workload across resources) without violating their due dates. Feasibility cuts are issued from the bottleneck stage, since infeasibility at any other stage propagates to the bottleneck. The SPS returns stage-$M$ completion times to the MP/SP loop.

\subsection{Perishability-Aware SPS}\label{app: SPS perishability}
Under perishability constraints, the synchronization deadline $b_i = t^d_{\theta(i)} - t^p_i$ on the start of preparation becomes the upper bound of a two-sided window:
\begin{equation}\label{eq: perishability SPS window}
    [a_i', b_i] \;=\; [\,t^d_{\theta(i)} - F_i - t^p_i,\;\; t^d_{\theta(i)} - t^p_i\,].
\end{equation}
The SPS is replaced by a VRPTW-feasibility heuristic acting on these windows, under the correspondence: preparation resources play the role of vehicles, orders play the role of customers, sequence-dependent setups $t_{ij}$ play the role of travel times, processing times are (possibly zero) service times, and routes correspond to per-resource sequences of orders. We suggest using off-the-shelf Google-OR-Tools for the resulting VRPTW problem. 
}

{
\section{MILP Extensions for Optional Features}\label{app: MILP extensions}
The unified MILP of Section~\ref{sec: MILP formulation} is written so that additional features can be switched on by adding a small number of parameters, variables, or constraints. We list some of them below.

\textbf{Heterogeneous resources and vehicles.}\label{sec: ext heterogeneous} Replace the scalar processing time \(t^p_i\) with a resource-indexed processing time \(t^p_{ic}\), and the scalar vehicle capacity \(\kappa\) with a vehicle-indexed capacity \(\kappa_v\). Constraint~\eqref{con:4} becomes \(\sum_{j \in I^{(1)}_k} (t' + t^p_{jc} + s_{ji}) z_{jct'} \leq T + z_{ict}(t - T)\), and constraint~\eqref{con:9} becomes \(\sum_{i \in I^{(2)}_k} d_i \cdot y_{ivt} \leq \kappa_v\). No new variables.

\textbf{Priority-based processing.}\label{sec: ext priority} Assign each order \(i\) a priority weight \(w_i \geq 0\). The objective~\eqref{eq: unified objective} is augmented to weight the earliness/lateness contribution per order by \(w_i\): each bracketed term in~\eqref{eq: unified objective} is multiplied by \(w_{i^\theta_j}\). No new variables or constraints.

\textbf{Strict (hard) delivery windows.}\label{sec: ext strict windows} Add the hard constraint \(a_i \leq \hat{t}^a_i \leq b_i\) for each order \(i\), where \(\hat{t}^a_i\) is the arrival time of \(i\) defined above. In the unified objective~\eqref{eq: unified objective}, set the earliness and lateness penalties to zero (\(\rho_e = \rho_l = 0\)) for orders with hard windows, since infeasibility is now ruled out by the new constraint rather than penalized.

\textbf{Vehicle access limits.}\label{sec: ext vehicle access} Let \(\mathcal{A} \subseteq I_k^{(2)} \times V\) denote the set of permitted order-vehicle pairs (e.g., dictated by vehicle size or driver licenses). Add the constraint \(y_{ivt} = 0\) for all \((i,v) \notin \mathcal{A}\) and all \(t\). No new variables.

\textbf{Emissions restrictions.}\label{sec: ext emissions} Let \(e^t_{ij}\) denote the emissions incurred on the leg from \(i\) to \(j\), and let \(E_v\) denote the per-vehicle emissions budget. Add the constraint \(\sum_{t} \sum_{i,j \in I^{(2)}_k, i \neq j} e^t_{ij} \cdot q_{ijt} \cdot \mathbf{1}\{\text{trip uses vehicle } v\} \leq E_v\) for each \(v \in V\). Implementation uses an auxiliary variable to link \(q_{ijt}\) to the vehicle index; alternatively, the emissions term can be added as a penalty to the objective.

\textbf{Perishability constraints.}\label{sec: ext perishability} Let \(F_i \geq 0\) be the maximum allowed time between completion of preparation for order \(i\) and its arrival at the customer. The completion-of-preparation time for order \(i\) is \(\sum_{c \in C} \sum_t (t + t^p_i) z_{ict}\) (the LHS of synchronization constraint~\eqref{con:17}). Add \(\hat{t}^a_i - \sum_{c \in C} \sum_t (t + t^p_i) z_{ict} \leq F_i\) for all \(i \in I_k\). No new variables.

\textbf{Batched preparation.}\label{sec: ext batching} When a single resource can process up to \(B\) orders simultaneously (e.g., an oven processing multiple meals or a batch picker collecting multiple orders in one walk), constraint~\eqref{con:3} is relaxed to \(\sum_{i \in I_k^{(1)}} z_{ict} \leq B\), and constraint~\eqref{con:4} is generalized so that the resource availability advances by the maximum processing time within the batch rather than the sum: \((\max_{j \in B_t} t^p_j) \cdot \mathbf{1}\{B_t \neq \emptyset\}\), where \(B_t\) denotes the batch active at time \(t\). Linearization of the max introduces auxiliary continuous variables \(u_{ct}\) representing the batch finish time on resource \(c\) starting at \(t\); details depend on the chosen batching policy (fixed-size vs.\ time-window-triggered).

\textbf{Multi-stage preparation.}\label{sec: ext multi-stage} When orders pass through \(M\) preparation stages (e.g., picking followed by packing), introduce a binary variable \(z^{(m)}_{ict}\) for each stage \(m = 1, \dots, M\), each with its own resource set \(C^{(m)}\). Constraints~\eqref{con:2}-\eqref{con:6} are replicated per stage. Stage precedence is enforced by \(\sum_{c \in C^{(m)}} \sum_t (t + t^p_{i,m}) z^{(m)}_{ict} \leq \sum_{c \in C^{(m+1)}} \sum_t t \cdot z^{(m+1)}_{ict}\) for all \(i\) and \(m = 1, \dots, M-1\). The synchronization constraint~\eqref{con:17} is applied with the final stage's completion time.
}

\section{{Additional Results}}\label{app:additional_results}
The aggregated cost figures in the main text mask how each method trades off between the underlying KPIs in the Production Variant. Table~\ref{tab: results with setups and TWs} therefore decomposes performance into average delay, average earliness, average violation, and average setup time across the four Production-Variant instance sets. The original ORT-S and ORT-L are defined in ~\ref{app: instances}; "ORT-S (3-5-5-7 Setup)" and "ORT-S (5-5-5-5 Setup)" are the modified-setup instances from the setup-time sensitivity analysis above.

\begin{table}[t]
\caption{{Total earliness, total delay, total violation, and average setup time (in min.) for the Production Variant.}}
\centering
\scriptsize
\begin{tabular}{|l|lrrrrr|}
\hline
Instances &
  KPI &
  {\textit{DDF-VFA}} &
 {\textit{FIFO}} &
 {\textit{Integrated}} &
 {\textit{AI}} &
{\textit{DDF-G}} \\ \hline
\multirow{4}{*}{ORT-S}  & Avg. Delay (in min.)               &    6.95	&85.35	&26.29&	26.04&	7.36 \\
                        & Avg. Earliness (in min.)                    &   3.89&	4.72&	4.71&	5.00&	3.91 \\
                        & Avg. Violation (in min.)                   &  10.85&	90.07&	30.99&	31.03&	11.27 \\
                        & Avg. Setup Time (in min.)                   &   2.54	&4.88&	3.34	&3.34&	2.56\\
                        \hline
\multirow{4}{*}{ORT-L}  & Avg. Delay (in min.)  &
7.98&	81.71&	19.96	&19.62&	8.26 \\
                        & Avg. Earliness (in min.)                  &    3.81	&5.02	&5.15	&5.17	&3.87  \\
                        & Avg. Violation (in min.)                    &   11.79	&86.74	&25.11	&24.78	&12.12  \\
                        & Avg. Setup Time (in min.)                   &   2.01	&4.86	&3.51	&3.54	&2.08 \\
                        \hline
\multirow{4}{*}{ORT-S (3-5-5-7 Setup)}  & Avg. Delay (in min.)               &                            13.43	&85.01	&76.01	&73.97&	13.21\\
                        & Avg. Earliness (in min.)                    &  2.93&	4.99&	3.87&	4.00&	3.22   \\
                        & Avg. Violation (in min.)                   & 16.36&	90.00&	79.87&	77.97&	16.44   \\
                        & Avg. Setup Time (in min.)                   &  4.01	&4.89&	4.13	&4.19&	4.05  \\
                        \hline
\multirow{4}{*}{ORT-S (5-5-5-5 Setup)}  & Avg. Delay (in min.)               &                          19.61&	83.83&	105.17&	101.83&	20.57\\
                        & Avg. Earliness (in min.)                   & 2.78	&4.69&	3.67	&3.32	&2.83  \\
                        & Avg. Violation (in min.)   &   22.39&	88.52&	108.84	&105.15	&23.40 \\
                        & Avg. Setup Time (in min.)               &                          5.00  &5.00	&5.00	&5.00 &5.00	  \\
                        \hline
\end{tabular}\label{tab: results with setups and TWs}
\end{table}

Two patterns are visible in the table that the aggregate violation numbers above do not show. First, the dominance of \textit{DDF-VFA} on delay (e.g., 6.95 min on ORT-S vs.\ 26.04 for \textit{AI} and 85.35 for \textit{FIFO}) carries over to the larger ORT-L instance (7.98 vs.\ 19.62 and 81.71), confirming that the gain is not specific to instance size. Second, average earliness is consistently small (under 5 min) across all methods, so the violation differences reported earlier are driven almost entirely by the delay side rather than by trading delay against earliness. Setup-time efficiency follows the same ordering: \textit{DDF-VFA} and \textit{DDF-G} produce average setups close to the minimum achievable (2.0-2.5 min on ORT-S/L), while \textit{FIFO}, \textit{Integrated}, and \textit{AI} sit near the unconditional setup-time mean ($\approx$5 min), indicating that the decomposition-based methods successfully exploit the sequence dependency rather than ignoring it.

\subsection{{Results on Independently-Generated Instances}}\label{app: synthetic instances}

In addition to the real-data evaluations of Section~\ref{sec: results objective values all}, which use the Meituan operational dataset for the Picking Variant and the ORTEC-880 instance for the Production Variant, we report results on a second family of instances generated procedurally. We denote them \textbf{MTN-Syn} (Picking-style synthetic instance) and \textbf{PRD-Syn-L} (Production-style synthetic instances, in the large scale). They serve as an additional stress test of the framework on a synthetic geography and arrival process that differ in distribution from the original field datasets, while preserving the structural features (arrival blocks, demand distribution, fleet sizes, setup matrix) used in \ref{app: instances}.

\paragraph{\textbf{MTN-Syn} construction} The synthetic Picking instance retains the paper's per-block Poisson arrival means $(1.3,\,29.5,\,31.1,\,27.7,\,26.7,\,35.9,\,20.5,\,0.5)$, processing time $\sim \mathcal{N}(7,1)$ minutes, click-to-door promise $\tau = 60$ minutes, fleet of five pickers and ten vehicles, and the vehicle speed of 37 distance units per minute used in \ref{app: instances}. It \emph{replaces} the real Meituan customer GPS coordinates - which in the original dataset are clustered around dispatch hotspots - with delivery locations drawn uniformly at random in a $\pm 600$-unit Manhattan-metric box around the depot.

\paragraph{\textbf{PRD-Syn-L} construction} The synthetic Production instances retain the demand distribution $\sim \mathcal{N}(6.2, 2.7)$ clipped to integer support $\{1,\ldots,23\}$, the order-count distributions $\sim \mathcal{N}(300, 22)$, the corresponding fleet size ($38$ vehicles, and $8$ resources), the vehicle capacity of $145$ units, and the setup-time matrix of Table~\ref{tab: setup times}. They \emph{replace} the ORTEC-880 explicit asymmetric distance matrix with uniformly random customer locations in the same $\pm 600$-unit Manhattan box, and \emph{replace} ORTEC's per-minute sampling from a pool of customer time windows with uniform-random arrival times over a 700-minute horizon together with a fixed $[t^o_i + 30,\, t^o_i + 90]$ window for each order.

\paragraph{Comparability caveat} The absolute objective magnitudes reported below are not directly comparable to Tables~\ref{tab: methods performance results Picking Variant} and \ref{tab: results with setups and TWs}. The qualitative ordering of the five policies, however, reproduces in every instance: decomposition methods dominate, with \textit{DDF-VFA} and \textit{DDF-G} an order of magnitude better than \textit{FIFO}, \textit{Integrated}, and \textit{AI}.

\paragraph{Results} Table~\ref{tab: MTN-Syn results} reports the Picking-Variant comparison on MTN-Syn at the same two delay-penalty levels $\rho_l \in \{10, 50\}$ used in Table~\ref{tab: methods performance results Picking Variant}, and Table~\ref{tab: PRD-Syn results} reports the Production-Variant comparison on PRD-Syn-L at $\rho_l = \rho_e = 1$ (matching the setting of the original ORT-L). All numbers are 20-seed averages. \textit{DDF-VFA} dominates on MTN-Syn at both penalty levels and on PRD-Syn-L instance, with the gain over \textit{DDF-G} ranging from $+4.29\%$ to $+11.32\%$.

\begin{table}[ht]
\centering
\scriptsize
\caption{MTN-Syn: 20-seed averages, mean $169.4$ orders per day, 200 training episodes.}\label{tab: MTN-Syn results}
\begin{tabular}{c l rrr c}
\hline
$\rho_l$ & Policy & Total & Routing & Lateness & \textit{DDF-VFA} \%\ red. \\
\hline
\multirow{5}{*}{$10$}
  & FIFO            & $2{,}299{,}491$ & $3{,}883$ & $2{,}295{,}609$ & $+84.60\%$ \\
  & Integrated      & $1{,}659{,}415$ & $3{,}054$ & $1{,}656{,}361$ & $+78.65\%$ \\
  & AI              & $1{,}615{,}900$ & $3{,}048$ & $1{,}612{,}852$ & $+78.08\%$ \\
  & DDF-G           &     $378{,}978$ & $2{,}712$ &     $376{,}267$ &  $+6.53\%$ \\
  & \textit{DDF-VFA} & $\mathbf{354{,}215}$ & $2{,}736$ & $351{,}479$ & - \\
\hline
\multirow{5}{*}{$50$}
  & FIFO             & $11{,}481{,}927$ & $3{,}883$ & $11{,}478{,}044$ & $+84.76\%$ \\
  & Integrated       &  $8{,}207{,}858$ & $3{,}061$ &  $8{,}204{,}798$ & $+78.68\%$ \\
  & AI               &  $8{,}070{,}003$ & $3{,}066$ &  $8{,}066{,}936$ & $+78.31\%$ \\
  & DDF-G            &  $1{,}828{,}513$ & $2{,}779$ &  $1{,}825{,}734$ &  $+4.29\%$ \\
  & \textit{DDF-VFA} & $\mathbf{1{,}750{,}071}$ & $2{,}716$ & $1{,}747{,}355$ & - \\
\hline
\end{tabular}
\end{table}

\begin{table}[ht]
\centering
\scriptsize
\caption{PRD-Syn-L: 20-seed averages, $\rho_l = \rho_e = 1$.}\label{tab: PRD-Syn results}
\begin{tabular}{l l rrrr c}
\hline
Instance & Policy & Total & Routing & Lateness & Earliness & \textit{DDF-VFA} \%\ red. \\
\hline
\multirow{5}{*}{PRD-Syn-L}
  & FIFO             & $1{,}008{,}486$ & $6{,}770$ & $1{,}008{,}482$ &        $4$ & $+88.44\%$ \\
  & Integrated       &     $707{,}229$ & $5{,}180$ &     $707{,}225$ &        $4$ & $+83.51\%$ \\
  & AI               &     $707{,}691$ & $5{,}187$ &     $707{,}687$ &        $4$ & $+83.52\%$ \\
  & DDF-G            &     $131{,}510$ & $4{,}247$ &     $125{,}535$ & $5{,}974$  & $+11.32\%$ \\
  & \textit{DDF-VFA} & $\mathbf{116{,}624}$ & $4{,}330$ & $111{,}897$ & $4{,}727$ & - \\
\hline
\end{tabular}
\end{table}

\section{Instances}\label{app: instances}
In this appendix, we describe in detail how we generated instances for the numerical experiments, based on two different datasets.
\paragraph{\textbf{The Picking Variant}}
We generated instances based on Meituan data, given by the TSL challenge \citep{Meituan24}. We generated an instance with five resources and ten vehicles.
    We assumed the processing times of orders is drawn from a Normal distribution with an average of seven minutes, and standart deviation of one minute.
    The distance between customer locations was calculated using Manhatan distance, and the vehicle average speed was assumed to be 37 distance units to minutes. We selected this speed by observing the driving times to orders that were in solo trips and dividing the Manhatan distance to them by the driving times to them in order to obtain an average speed.
    We assumed the promised delivery time is 60 minutes, which is about the average promised time from the dispatching system in the data plus seven minutes accounted for processing.
We assumed Poisson arrival according to the average number of orders that arrive during five days in two hours intervals (8:00-24:00), in particular, these averages are 
 1.3, 29.5, 31.1, 27.7, 26.7, 35.9, 20.5 and 0.5 orders.
% 08:00 - 10:00: 2.6 orders
% 10:00 - 12:00: 59.0 orders
% 12:00 - 14:00: 62.2 orders
% 14:00 - 16:00: 55.4 orders
% 16:00 - 18:00: 53.4 orders
% 18:00 - 20:00: 71.8 orders
% 20:00 - 22:00: 41.0 orders
% 22:00 - 24:00: 1.0 orders

\paragraph{\textbf{The Production Variant}}
We generated instances based on real data from \citep{ortec2022}. The largest available dataset, containing 880 customer orders, was selected. Below, we describe the data used and additional assumptions made to complete the instance generation. 

The given vehicle capacity is 145 units.
The demand distribution has a mean of 6.2 units (median 6) with a standard deviation of 2.7 units. The maximum demand is 23 units, and the minimum is 1 unit.
In the experiments conducted for the "EURO Meets NeurIPS 2022 Vehicle Routing Competition" (which also used \citet{ortec2022}), dynamic instances were generated by assuming up to 100 orders could arrive at the start of each hour. The actual number of arriving orders followed a uniform distribution between 0 and 100. Customer locations were sampled from the datasets, and for each customer, the time window, demand, and service time were also sampled from the available data. Customers with (strict in their application) time windows that made it infeasible to serve them (e.g., those with windows ending in less than an hour) were excluded.
We adapt the same dynamic instance generation but assume orders arrive on a per-minute basis rather than hourly. Additionally, in our adaptation, we append service times to the travel times, ensuring each route reflects both driving and service duration at customer sites.

We generated both small (ORT-S) and large (ORT-L) instances by varying the number of orders and vehicles, with preparation times linked to order size. This section evaluates the ability of the \textit{DDF-VFA} to balance earliness, delay, and setup times, highlighting its adaptability to both small and large-scale operations.
For ORT-S instances, we considered an average of 200 orders per day, following a normal distribution with a standard deviation of 15 orders, and utilized 25 vehicles. We assumed the availability of 5 resources for the ORT-S instances. For the ORT-L instances, we considered an average of 300 orders per day following a normal distribution with a standard deviation of 22 orders, utilized 38 vehicles, and assumed the availability of 8 resources (i.e., multiplied all by 1.5).
Preparation times, which are not included in the datasets, were determined by the demand size of each order, with the processing time for order $i$ calculated as $t^p_i = d_i$ minutes. Each order was associated with one of four possible classes, and the setup times between orders, which depended on their respective classes, are presented in Table \ref{tab: setup times}. 
% The travel time statistics are as follows: the mean travel time is 33.4 minutes, the median travel time is 31.8 minutes, the maximum travel time is 98.2 minutes, and the standard deviation is 15.8 minutes. 
% %The service times distribution is shown in Figure \ref{fig:demand and service} on the right side.
% The time windows distribution 
% %presented in Figure \ref{fig:time windows},
% includes the following statistics: the mean duration is 2.8 hours, the median duration is 2.1 hours, the minimum duration is 1.9 hours, the maximum duration is 12.5 hours, and the standard deviation of duration is 1.7 hours. Additionally, the mean start time is 219.0 minutes, the mean end time is 387.3 minutes, the minimum start time is 0.0 minutes, the minimum end time is 195.0 minutes, the maximum start time is 380.0 minutes, the maximum end time is 750.0 minutes, the standard deviation of start times is 100.7 minutes, and the standard deviation of end times is 103.8 minutes.

\begin{table}[] 
\caption{The setup times (in minutes) between orders of different classes.}
\centering
\scriptsize
\begin{tabular}{|l|c|c|c|c|}
\hline
\textbf{From /To} & Class 1 & Class 2 & Class 3  & Class 4 \\
\hline
Class 1 & 0 & 5 & 10 & 5 \\
\hline
Class 2 & 5 & 0 & 5 & 10 \\
\hline
Class 3 & 10 & 5 & 0 & 5 \\
\hline
Class 4 & 5 & 10 & 5 & 0 \\
\hline
\end{tabular}\label{tab: setup times}
\end{table}

\end{document}